\newtheorem{theorem}{Theorem}
\newtheorem{lemma}{Lemma}
\newtheorem{proposition}{Proposition}
\newtheorem{definition}{Definition}
\newtheorem{corollary}{Corollary}
\newtheorem{claim}{Claim}
\newcommand{\f}[2]{\frac{#1}{#2}}
\newcommand{\dpr}[2]{\langle #1,#2 \rangle}
\newcommand{\al}{\alpha}
\newcommand{\be}{\beta}
\newcommand{\ga}{\gamma}
\newcommand{\de}{\delta}
\newcommand{\ka}{\kappa}
\newcommand{\la}{\lambda}
\newcommand{\si}{\sigma}
\newcommand{\vp}{\varphi}
\newcommand{\Vp}{\Phi}
\newcommand{\rone}{\mathbf R^1}
\DeclareMathOperator{\sech}{sech}
\newcommand{\eps}{\epsilon}
\newcommand{\cl}{\mathcal L}
\newcommand{\ch}{\mathcal H}
\newcommand{\cx}{\mathcal X}
\newcommand{\p}{\partial}
\newcommand{\beq}{\begin{equation}}
\newcommand{\eeq}{\end{equation}}
\newcommand{\beqna}{\begin{eqnarray*}}
\newcommand{\eeqna}{\end{eqnarray*}}
\newcommand{\beqn}{\begin{equation*}}
\newcommand{\eeqn}{\end{equation*}}
\newcommand{\bp}{\begin{proof}}
\newcommand{\ep}{\end{proof}}
\newcommand{\bprop}{\begin{proposition}}
\newcommand{\eprop}{\end{proposition}}
\newcommand{\bt}{\begin{theorem}}
\newcommand{\et}{\end{theorem}}
\newcommand{\bex}{\begin{Example}}
\newcommand{\eex}{\end{Example}}
\newcommand{\bc}{\begin{corollary}}
\newcommand{\ec}{\end{corollary}}
\newcommand{\bcl}{\begin{claim}}
\newcommand{\ecl}{\end{claim}}
\newcommand{\bl}{\begin{lemma}}
\newcommand{\el}{\end{lemma}}
\begin{document}

\title
[Stability of traveling waves for the  short pulse equation]
{Spectral stability for  classical periodic waves  of the Ostrovsky and short pulse models}

\author{Sevdzhan  Hakkaev}
\author{Milena Stanislavova}
\author{Atanas Stefanov}

\address{Sevdzhan Hakkaev, Faculty of Arts and Sciences, Department of Mathematics and Computer Science, Istanbul Aydin University, Istanbul, Turkey and Faculty of Mathematics and Informatics, Shumen University, Shumen, Bulgaria }\email{shakkaev@fmi.shu-bg.net}
\address{Milena Stanislavova, Department of Mathematics, University of Kansas,
1460 Jayhawk
Boulevard,  Lawrence KS 66045--7523}
\email{stanis@ku.edu}
\address{ Atanas Stefanov,
Department of Mathematics, University of Kansas,
1460 Jayhawk
Boulevard,  Lawrence KS 66045--7523}
\email{stefanov@ku.edu}

\thanks{ Milena Stanislavova is partially supported by NSF-DMS, Applied Mathematics program, under
grant \# 1516245.
Atanas Stefanov is partially supported  from NSF-DMS, Applied Mathematics program, under grant \# 1313107.}

\date{\today}

\subjclass[2000]{35B35, 35B40, 35G30}

\keywords{ spectral stability, traveling waves, short pulse equation}
\begin{abstract}
We consider the Ostrovsky and short pulse models   in a symmetric spatial interval, subject to periodic boundary conditions. For the Ostrovsky case, we revisit the   classical periodic traveling waves and for the short pulse model, we explicitly construct traveling waves in terms of Jacobi elliptic functions. For both examples, we show spectral stability, for all values of the parameters. This is achieved by studying  the non-standard eigenvalue problems in the form $L u=\la u'$, where $L$ is a Hill operator.

\end{abstract}
\
\maketitle

\section{Introduction}
The (generalized) Korteweg-De Vries equation
\begin{equation}
\label{d:10}
u_t+\be u_{xxx}+(f(u))_x=0,
\end{equation}
is a basic model in the theory of water waves. In fact, this is one of the most ubiquitous  models in the theory of partial differential equations, modeling the unidirectional motion of waves in shallow water.  Its Cauchy problem has been comprehensively
studied in the last 50 years. Our interest is in a related model, which takes into account the effect of a (small) rotation force acting on the fluid.
  More specifically,
 \begin{equation}
\label{10}
(u_t+\be u_{xxx}+(f(u))_x)_x= \eps u, -L \leq x\leq L.
\end{equation}
 Note that we consider  \eqref{10}  on a finite nterval, with periodic boundary conditions.
 The problem on the whole line case  certainly makes sense physically, as an approximation of situations where the motion takes place on long intervals. We will however only consider the periodic case henceforth.

 We refer to  \eqref{10} as the regularized short pulse equation (RSPE), when $\beta\neq 0$.
 In  \cite{LL2, LL1} the authors   have constructed  traveling wave solutions of \eqref{10} on the whole line by employing variational methods. They have also studied the stability of such solutions by following the  Grillakis-Shatah-Strauss  arguments. Further results on the stability of these traveling waves  were obtained in  \cite{Liu1,LO}. In \cite{CMJ}, the authors have  constructed pulse solutions of \eqref{10}, for small values of $\eps$,  via singular perturbation theory.  In \cite{CMJS},  they have  shown the existence of multi-pulse solutions.  The   stability of these  waves remains an interesting open problem.

An interesting special model occurs in the absence of a KdV regularization -  in other words, $\be=0$.  This is referred to in the literature, depending on the form of the non-linearity $f$,  as the reduced Ostrovsky  or Ostrovsky-Hunter or short pulse model\footnote{Usually the models with quadratic non-linearities are referred to as Ostrovsky models, while cubic ones are referred to as short pulse models. Unfortunately,  there does not appear to be an  uniformity in this matter.}. Namely, after scaling all parameters to one, we have
\begin{equation}
\label{s1}
(u_t +(f(u))_{x})_x=u.
\end{equation}
The model \eqref{s1}, with various form of the nonlinearity has rich history, most of it unrelated to the its connections to KdV. Ostrovsky, \cite{Ostrov} in the late 70's has introduced the first model of this sort.  In the early 90's, Vakhnenko, \cite{Vakh}  proposed an alternative   derivation, while Hunter, \cite{Hunter} proposed   some numerical simulations.  The well-posedness questions were investigated by Boyd, \cite{Boyd};  Schaefer and Wayne, \cite{SW}; Stefanov-Shen-Kevrekidis, \cite{SSK}.  Liu, Pelinovsky and Sakovich, \cite{LPS1, LPS2} have studied   wave breaking, which was later supplemented by the global regularity results for small, in appropriate sense data,  of Grimshaw-Pelinovsky, \cite{GP}.  There are numerous works on explicit traveling wave solutions of these models,  \cite{GHJ, mpo,  ss2,  Stepan, ost10, ost7}.  One should note that some of this solutions are not classical solutions, but rather a multi-valued ones, \cite{ss2}.  Several authors have also explored the integrability of the Ostrovsky equation, \cite{ss2, ost7}. In particular,  they have managed to construct the traveling waves by means of the inverse scattering transform. In this regard, it is worth mentioning the very recent work \cite{JP}, where the authors study small periodic waves of the quadratic and cubic models in the form \eqref{s1}. They show orbital stability of such waves (with respect to all subharmonic perturbations!) by adapting the methods of \cite{GalP} for periodic waves of the defocussing cubic NLS. Their proof makes sense of a representation of these waves as unconstrained minimizers of appropriate functionals. Another recent development in the area is our recent paper, \cite{HSS}, which gives an explicit construction of peakon type solutions and establishes their stability.

Our main interest in this paper  is the stability of explicit traveling waves for the short pulse equation \eqref{s1}. More precisely, we follow the recent work of  \cite{GHJ}, who construct the solutions of \eqref{s1}, for $f(u)=u^2$, in terms of Jacobi elliptic functions, after a (solution dependent) change of variables.
We consider  these  solutions and we show their spectral stability with respect to co-periodic perturbations ( i.e. with respect of perturbations of the same period). In addition, we construct a family of explicit solutions in the cubic case as well. Their spectral stability for co-periodic perturbations is established as well.  In all our considerations, we consider the linearized problems after the  change of variables, where we get eigenvalue problems in the form
\begin{equation}
\label{z:15}
\cl u=\mu u'.
\end{equation}
where $\cl$ is a second order Hill operator, subject to periodic boundary conditions. Clearly, \eqref{z:15} is a non-standard eigenvalue problem, for which we develop appropriate methods to study its stability.

 We now continue on to derive the profile equations and the linearized equations.

\subsection{Profile equation and the linearized problem}
As we have alluded above, we consider \eqref{s1} with  quadratic and cubic non-linearity. Even though, one can mostly proceed to derive the profile equation
with the general  non-linearity $f(u)$, we prefer to use the explicit form in the two cases, since the specific, solution dependent transformation (see \cite{GHJ}), depends in a significant way on the particular form of $f$.
\subsubsection{The quadratic model}
In  order to derive the profile equations, we  follow the approach of \cite{GHJ}. Our first  consideration is
 the quadratic model $f(u)=\f{u^2}{2}$, the so-called Ostrovsky equation. It reads
\begin{equation}
\label{20}
(u_t+u u_x)_x = u, \ \ -L\leq x\leq L.
\end{equation}
Using the traveling wave ansatz, $u(t,x)=\vp(x - ct)$, for an unknown  periodic function $\vp$,  we arrive at the ODE,
\begin{equation}
\label{30}
((\vp - c)\vp_\xi)_\xi=\vp \ \ -L\leq \xi \leq L.
\end{equation}
 Clearly, \eqref{30}, being a  fully nonlinear equation,  is not a very nice object to deal with.  Thus, we perform a  (solution dependent) change of variables, namely
 \begin{equation}
 \label{80}
 \xi=\Xi(\eta):=\eta - \f{\Psi(\eta)}{c}, \ \ \vp(\xi)=\Phi(\eta)=\Psi'(\eta).
 \end{equation}
 If $\vp$ is an even function, so is $\Phi$ and then naturally $\Psi$ is an odd function. Compute
\begin{equation}
\label{z:25}
\f{d\Xi}{d\eta}=1-\f{\Psi'(\eta)}{c}=1-\f{\vp(\xi)}{c},
\end{equation}
so that
$$
\vp_\xi=\f{\Phi_\eta}{\f{d\Xi}{d\eta}}=  \f{c \Phi_\eta}{c-\vp(\xi)}.
$$
Thus, $(\vp-c)\vp_\xi=-c\Phi_\eta$. Taking another derivative in $\xi$,
$$
\Phi(\eta)=\vp(\xi)=((\vp - c)\vp_\xi)_\xi=- \f{c^2 \Phi_{\eta\eta}}{c-\vp(\xi)}=
- \f{c^2 \Phi_{\eta\eta}}{c-\Phi(\eta)}.
$$
We are thus lead to the profile equation
\begin{equation}
\label{per1}
c^2\Phi''=\Phi(\Phi-c).
\end{equation}
Clearly, \eqref{per1} is a standard Schr\"odinger equation, which is much easier to study. We do so in Section \ref{sec:3} below, where an explicit\footnote{in terms of Jacobi elliptic functions} expression for $\Phi$ is found. One has to keep in mind however, that the solutions of \eqref{per1} are equivalent\footnote{in appropriate sense, to be made throughout the article, in appropriate places}, so long as the transformation \eqref{80} is invertible. This is clearly requiring that the function $\eta\to \Xi(\eta)$ is monotone or equivalently, from \eqref{z:25}, that either $\vp(\xi)>c$ for each  $\xi \in [-L,L]$ or $\vp(\xi)<c$ for each  $\xi \in [-L,L]$. If that is the case, we have an interval
$[-M,M]$, so that $\Xi:[-M,M]\to [-L,L]$ is a diffeomorphism and the profile equation \eqref{per1} has to be considered with periodic boundary conditions on $[-M,M]$.

Our next task is to derive  the linearized problem for such solutions $\vp$  -
assuming that they exist and the transformation \eqref{80} is invertible in the appropriate interval.
To this end,
 we take the ansatz $u(t,x)=\vp(x - c t)+v(t, x  - c t)$ in \eqref{20} and ignore all quadratic terms. We obtain the following linearized equation
 \begin{equation}
 \label{50}
 (v_t +((\vp - c)v)_\xi)_\xi=v \ \ -L<\xi<L.
 \end{equation}
 Next, we turn \eqref{50} into an eigenvalue problem, by letting
 $v(t, \xi)=e^{\la t} w(\xi), w\in H^2[-L,L]$. This results in
 \begin{equation}
 \label{60}
 (\la w +((\vp - c )w)_\xi)_\xi=w \ \ -L<\xi<L.
 \end{equation}
 Note that \eqref{60} guarantees that $w$ is an exact derivative, which justifies our next
 change of variables $w=z_\xi$. Here, we can assume that $z \in H^3(-L,L): \int_{-L}^L z(x) dx=0$.  This can be of course always be achieved and in fact, it fixes the function $z$. Thus, we reduce matters to
  $$
  (\la z_\xi +((\vp - c)z_\xi)_\xi)_\xi=z_\xi \ \ -L<\xi<L.
  $$
 An integration in $\xi$ (and taking into account that $\int_{-L}^L z(x) dx=0$) allows us to transform the last equation into the equivalent one
  \begin{equation}
 \label{70}
 \la z_\xi +((\vp - c )z_\xi)_\xi=z \ \ -L<\xi<L.
  \end{equation}
  Indeed, in the last equation, the  constant of integration is zero, since we have an exact derivative on the left-hand side and a function of mean value zero on the right-hand side.

Now,  assume that there is an interval $[-M,M]$, so that $\Xi:[-M,M]\to [-L,L]$ is a diffeomorphism.
This appeared  previously as a necessary condition for the wave $\vp$ to exists. Denote the inverse function of $\Xi$ by $\eta(\xi)$, that is $\xi=\Xi(\eta(\xi))$.
 Introduce $Z\in L^2(-M,M)$, so that $z(\xi)=Z(\eta(\xi))$.   We have
\begin{eqnarray*}
z_\xi(\xi)=\f{Z_\eta}{\f{d\xi}{d\eta}}=\f{Z_\eta}{1-\f{\vp(\xi)}{c}}=\f{c Z_\eta}{c-\vp(\xi)}.
\end{eqnarray*}
Thus, $(c-\vp(\xi))z_\xi=c Z_\eta$ and hence
$$
[(c-\vp)z_\xi]_\xi= c \f{d}{d\xi} ( Z_\eta(\eta(\xi)))=  c \f{Z_{\eta\eta}}{\f{d\xi}{d\eta}}=
\f{c^2 Z_{\eta\eta}}{c-\vp(\xi)}=\f{c^2 Z_{\eta\eta}}{c-\Phi(\eta)}
$$
Plugging the result in \eqref{70}, we obtain
$$
\f{c^2 Z_{\eta\eta}}{c- \Phi(\eta)}=[(c-\vp)z_\xi]_\xi=\la z_\xi - z =\la  \f{c Z_\eta}{c- \Phi} - Z.
$$
All in all, we obtain the eigenvalue problem,
\begin{equation}
\label{205}
-c^2 Z_{\eta\eta} - c Z +  \Phi Z =  - \la c Z_\eta, Z\in L^2(-M,M).
\end{equation}

\subsubsection{The cubic model}
For the cubic model, we follow an identical approach, with just a slight changes to reflect the cubic nonlinearity. More precisely, let $f(u)=\f{u^3}{3}$. The profile
equation for the traveling wave solution $\vp(x-ct)$ is
\begin{equation}
\label{z:30}
((\vp^2-c)\vp_\xi)_\xi=\vp, \ \ -L<\xi<L.
\end{equation}
Next, the change of variables is of course in the form
\begin{equation}
\label{z:50}
\Xi(\eta)=\eta-\f{\Psi(\eta)}{c}, \vp(\xi)=\Phi(\eta), \Psi'(\eta)=\Phi^2(\eta)=\vp^2(\xi).
\end{equation}
Again, if $\vp$ and $\Phi$ are odd functions, then so is $\Psi$. Similar to the quadratic case,
we have
$
(\vp^2-c)\vp_\xi=-c\Phi_\eta,
$
whence
$$
\Phi(\eta)=\vp(\xi)=((\vp^2-c)\vp_\xi)_\xi=-\f{c^2 \Phi_{\eta\eta}}{c-\vp^2(\xi)}= -\f{c^2 \Phi_{\eta\eta}}{c-\Phi^2(\eta)}.
$$
Thus, we have the profile equation in the form
\begin{equation}
\label{per11}
-c^2 \Phi_{\eta\eta}-c\Phi+\Phi^3=0.
\end{equation}
Assuming that there is an interval $[-M,M]$, so that $\Xi:[-M,M]\to [-L,L]$ is a diffeomorphism, we can consider the profile equation \eqref{per11} with periodic boundary conditions on $[-M,M]$.

We now discuss the linearization around the wave $\vp(x-ct)$ for the model $(u_t+u^2 u_x)_x=u$.
Following the same steps as in the quadratic case, with $\Xi$ defined as in \eqref{z:50},  we arrive at the following linearized problem
\begin{equation}
\label{z:60}
-c^2 Z_{\eta\eta}-c Z+\Phi^2 Z=-\la c Z_\eta, Z\in L^2(-M,M).
\end{equation}
\subsubsection{Definition of spectral stability and plan of the paper}
Now that we have introduced the profile equations and the linearized problems, it is time to formally introduce the definition of stability.
For  instability we require  that \eqref{70} has a non-trivial solution $Z$ for some  $\la: \Re \la>0$. One can easily see that if $\la$ (and some $Z$) is a solution of \eqref{205} or \eqref{z:60}, then
$(-\la, Z(-\cdot))$ is also a solution. That is, there is the spectral
invariance $\la\to -\la$. Thus, instability means that there is a solution of \eqref{205} (or \eqref{z:60}) with right hand-side $\mu=-\la c>0$. If such a solution does not exist, we say that we have stability. Formally,
\begin{definition}
\label{defi:1}
Assume that the periodic wave $\vp$ is a solution of \eqref{30}, with some $c\neq 0$. Assume also that there exists a one-to-one mapping $\Xi:(-M,M)\to (-L,L), M\in (0, \infty]$ satisfying \eqref{80}.
We say the the wave is spectrally unstable, if there exists $\mu: \Re \mu>0$ and a function
$Z\in H^2[-M,M]\cap C^2(-M,M)$, so that
\begin{equation}
\label{z:62}
L[Z]:=-c^2 Z_{\eta\eta}-c Z+\Phi Z=\mu Z'.
\end{equation}
Similarly, the solution $\vp$ of \eqref{z:30} is unstable, if there is $\mu: \Re\mu>0$ and
$Z\in H^2[-M,M]\cap C^2(-M,M)$, so that
\begin{equation}
\label{z:65}
L[Z]:=-c^2 Z_{\eta\eta}-c Z+\Phi^2 Z=\mu Z'.
\end{equation}
\end{definition}

The paper is organized as follows. In Section \ref{sec:3} we first revisit the construction of the even traveling waves for the Ostrovsky model and the odd solutions for the short pulse equation. Toward the end of Section \ref{sec:3}, appropriate spectral information for the corresponding Hill operators is supplied as well.  In Section \ref{sec:33}, we develop, for the purposes of the subsequent sections,  sufficient conditions  for the positivity of a given self-adjoint operator (with finitely many negative eigenvalues) on a subspace of finite co-dimension.   In Section \ref{sec:4}, we consider the spectral stability of the waves in the quadratic (Ostrovsky) case. In Section \ref{sec:10}, we discuss the spectral stability in the cubic (short pulse) case. Finally, in Section \ref{sec:6}, we discuss the parabolic peakons for the Ostrovsky model, which can be seen as a limiting case of the
waves constructed previously. We show, that the corresponding eigenvalue problem has smooth solutions inside the interval of consideration (which however do not satisfy any periodic boundary conditions).

\section{Construction of the periodic waves and the spectral properties of the Hill operators }
 \label{sec:3}
We first discuss the construction of periodic solutions in the case of quadratic nonlinearities.

\subsection{Quadratic nonlinearities}
 Integrating once the equation (\ref{per1}), we get
    \begin{equation}\label{per2}
      \Vp_{\eta}^2=\frac{2}{3c^2}\left[ \Vp^3-\f{3c}{2}\Vp^2+A\right]=F(\Vp) ,
    \end{equation}
  where $A$ is a constant of integration.
 For  { \bf $c<0$}, in the phase plane $(\Vp, \Vp')$  equation (\ref{per1}) has
equilibra at $(0,0)$ which is saddle point and at $\left( c , 0\right)$ which is a center.
For { \bf $c>0$}, in the phase plane $(\Vp, \Vp')$  equation (\ref{per1}) has
equilibra at $(c,0)$ which is saddle point and at $\left( 0 , 0\right)$ which is a center.

Let $\Vp_0<\Vp_1<\Vp_2$ are roots of polynomial $F(\Vp)$.  We have
\begin{equation}\label{per3}
 \left|
  \begin{array}{ll}
    \Vp_0+\Vp_1+\Vp_2=\f{3c}{2}\\
   \\
   \Vp_0\Vp_1+\Vp_0\Vp_2+\Vp_1\Vp_2=0.
   \end{array}
   \right.
   \end{equation}
   Introducing a new
variable $s\in(0,1)$ via $\Vp=\Vp_0+(\Vp_1-\Vp_0)s^2$, we transform
(\ref{per2}) into $$s'^2=\alpha^2(1-s^2)(1-k^2s^2)$$ where
$\alpha$ and $k$ are positive constants  given by
\begin{equation}\label{per4}
\alpha^2=\frac{\f{3c}{2}-\Vp_1-2\Vp_0}{6c^2}, \quad
k^2=\frac{\Vp_1-\Vp_0}{\f{3c}{2}-\Vp_1-2\Vp_0}.
\end{equation}
 Therefore
\begin{equation}\label{2.5}
\Vp=\Vp_0+(\Vp_1-\Vp_0)sn^2(\alpha x;\ka).
\end{equation}
From (\ref{per3}) and (\ref{per4}, we have
  \begin{equation}\label{per5}
    \left|
      \begin{array}{ll}
        \Vp_1-\Vp_0=6c^2\alpha^2\kappa^2 \\
        \\
        \Vp_0=\f{c}{2}-2c^2\alpha^2(1+\kappa^2) \\
        \\
        \Vp_1=\f{c}{2}+2c^2\alpha^2(2\kappa^2-1)\\
        \\
        16c^2\alpha^4(1-\kappa^2+\kappa^4)=1.
      \end{array}
    \right.
   \end{equation}
For $c>0$, from (\ref{2.5}) and (\ref{per5}), and using that $sn^2(x)+cn^2(x)=1$,  we have
  $$1-\f{\varphi(\xi)}{c}=2\alpha^2c(1-2\kappa^2+\sqrt{1-\kappa^2+\kappa^4}+3cn^2(\alpha \xi; \kappa))>0$$
With this we verified that the function defined in (\ref{80}) is one-to-one.
\subsection{Cubic nonlinearities}
Here we need  periodic solutions of \eqref{per11}.
Integrating once the equation (\ref{per11}), we get
\begin{equation}\label{per12}
      \Vp_{\eta}^2=\frac{1}{2c^2}\left[ \Vp^4-2c\Vp^2+A\right]=F(\Vp) ,
    \end{equation}
  where $A$ is a constant of integration. Suppose that the polynomial $\rho^4-2c\rho^2+A$ has two positive roots $\Vp_1>\Vp_2>0$. Then, the equation (\ref{per12}) can be written in the form
  \begin{equation}\label{per13}
    \Vp_{\eta}^2=\f{1}{2c^2}(\Vp^2-\Vp_1^2)(\Vp-\Vp_2^2).
  \end{equation}
  Then the solution of the equation (\ref{per13}) is given by
    \begin{equation}\label{per14}
      \Vp(x)=\Vp_2sn(\alpha x, \kappa),
    \end{equation}
    where $-\Vp_2<\Vp(x)<\Vp_2$ and
    \begin{equation}\label{per15}
      \left| \begin{array}{ll}
        \Vp_1^2+\Vp_2^2=2c\\
        \\
        \Vp_1^2\Vp_2^2=A\\
        \\
        \kappa^2=\f{\Vp_2^2}{\Vp_1^2}, \; \; \alpha=\f{\Vp_1}{\sqrt{2}c}.
        \end{array} \right.
      \end{equation}
Note that from (\ref{per15}), $2c=\Phi_1^2+\Phi_2^2>2\Phi_2^2$, whence we have
\begin{equation}
\label{z:70}
\Vp^2-c<\Vp^2_2-c^2<0
\end{equation}

 \subsection{Quadratic nonlinearities: spectral properties of the Hill operator}

For the operator
  $$L=-c^2\partial_x^2-c+\Vp , $$
  we have the representation
  $ c=\f{1}{4\alpha^2\sqrt{1-\kappa^2+\kappa^4}}>0$ and
      \begin{equation}
    \label{400}
    L=c^2\alpha^2(-\partial_y^2+6 k^2 sn^2(y, k)-2(1+k^2+\sqrt{1-k^2+k^4})).
    \end{equation}
     It is well-known 
\cite{HIK1} that the first three eigenvalues of
     $\Lambda_1=-\partial_{y}^{2}+6k^{2}sn^{2}(y, k)$,
     with periodic boundary conditions on $[-K(k), K(k)]$ are
     simple. These eigenvalues and the corresponding eigenfunctions are:
      \begin{eqnarray}
\label{c:10}
         \nu_{0} &=& 2+2k^2-2\sqrt{1-k^2+k^4},
         \ \  \phi_{0}(y)=1-(1+k^2-\sqrt{1-k^{2}
         +k^{4}})sn^{2}(y, k),\\
\label{c:20}
         \nu_{1} &=& 4+k^{2}, \ \ \phi_{1}(y)=sn(y, k)cn(y, k)
         =-k^{-2}dn'(y, k),\\
\label{c:30}
         \nu_{2} &=& 2+2k^{2}+2\sqrt{1-k^{2}+k^{4}},
         \ \  \phi_{2}(y)=1-(1+k^{2}+\sqrt{1-k^{2}
         +k^{4}})sn^{2}(y, k).
        \end{eqnarray}

  Since the eigenvalues of $L$ and $\Lambda_1$ are related by
  $\lambda_n=\alpha^2 c^2 (\nu_n-2(1+k^2+\sqrt{1-k^2+k^4})$ in the case $c>0$ and $\lambda_n=\alpha^2 c^2 (\nu_n-2(1+k^2-\sqrt{1-k^2+k^4})$, it follows that the first three eigenvalues of the operator       $L$, equipped with periodic boundary condition on $[-K(k), K(k)]$
      are simple and $\lambda_0<0, \lambda_1<0, \lambda_2=0$ for $c>0$.

\subsection{Cubic nonlinearities: spectral properties of the Hill operator}

Here, we are interested of the spectral properties of the operator
 $$L=-c^2\partial_x^2-c+\Vp^2.$$
  From (\ref{per15}), we have
  \begin{equation}\label{cc1}
    \Vp_1^2=\f{2c}{1+\kappa^2}, \; \; \Vp_2^2=\f{2c\kappa^2}{1+\kappa^2}, \; \; \alpha^2=\f{1}{(1+\kappa^2)c}.
  \end{equation}
    Using (\ref{cc1}), we get the following representation
      \begin{equation}\label{cc2}
        L=c^2\alpha^2[-\partial_y^2+2\kappa^2sn^2(y, \kappa)-(1+\kappa^2)].
      \end{equation}
      The spectrum of
 $\Lambda_2=-\partial_y^2+2k^{2}sn^{2}(y, k)$ is formed
 by bands $[k^{2}, 1]\cup [1+k^{2}, +\infty)$. The
 first two eigenvalues and the corresponding eigenfunctions with
 periodic boundary conditions on $[-2 K(\kappa), 2 K(\kappa)]$ are simple and
 $$\begin{array}{ll}
          \epsilon_0=k^2, & \theta_0(y)=dn(y, k),\\[1mm]
          \epsilon_1=1, & \theta_1(y)=cn(y, k),\\[1mm]
          \epsilon_2=1+k^2, & \theta_2(y)=sn(y, k).
        \end{array}
      $$
       Since the eigenvalues of $L$ and $\Lambda_2$ are related by
  $\lambda_n=\alpha^2 c^2 (\epsilon_n-(1+k^2))$ , it follows that the first three  eigenvalues of the operator
      $L$, equipped with periodic boundary condition on $[-2 K(\kappa),2 K(\kappa)]$
      are simple and $\lambda_0<\la_1<0=\la_2$.

   \section{Sufficient condition for the positivity of a  self-adjoint operator positive on a finite co-dimension subspace}
   \label{sec:33}
   In this section, we develop an abstract result for positivity of self-adjoint operators, when acting on a finite co-dimension subspace of  a Hilbert space. In the applications, we would be interested in showing that a given Hill operator is positive, when restricted to a subspace, with finite co-dimension. The question then is the following - how can one characterize these subspaces or at least develop sufficient conditions for the positivity?

   In a simple situation, we have the following setup. Assume that a self-adjoint operator $\ch$, acting on a Hilbert space $\cx$ has one simple negative eigenvalue, with an eigenvector, say $\eta_0$. Clearly, $\ch|_{\eta_0^\perp}\geq 0$. This of course does not preclude the possibility that for some other vector $\xi_0$, we still have $\ch|_{\xi_0^\perp}\geq 0$.    It is reasonable to ask for some characterization (or at least sufficient condition) of such vectors $\xi_0$.

   More generally, one may ask the same question for subspaces with arbitrary finite co-dimension. Suppose that $\ch$ has $k, k\geq 1$ negative eigenvalues, counted with multiplicities, with eigenvectors say $\{\eta_1, \ldots, \eta_k\}$, which form an orthonormal system.  Denoting $Z_0=span \{\eta_1, \ldots, \eta_k\}$, we have $\ch|_{Z_0^\perp}\geq 0$. The question is again to come up with a description or at least criteria to decide which subspaces $Z$ have the property $\ch|_{Z^\perp}\geq 0$. A moment thought reveals that such subspace $Z$ must necessarily have dimension at least $k$, that is $dim(Z)\geq k=dim(Z_0)$.

 \subsection{ Positivity on a co-dimension one subspace}
 \label{sec:3.1}
  Our next result gives a sufficient condition for $\xi_0$, so that $\cl|_{\xi_0^\perp}\geq 0$.  We will apply this result to establish the stability of the waves constructed Section \ref{sec:3}, but
    the lemma  is   of  independent interest\footnote{To the best of our knowledge, this is a new resultIt is possible that we are simply unaware of
 its existence in the literature.}.
\begin{lemma}
\label{lina}
Let $(\ch, D(\ch))$ be a self-adjoint operator  on a Hilbert space $X$. Assume that
\begin{itemize}
\item $\ch$ has exactly one negative eigenvalue counted with multiplicities. That is for some $\si>0, \eta_0\neq 0, \eta_0\in D(\ch)$,
$$
\ch \eta_0 = -\si^2 \eta_0, \ch|_{\eta_0^\perp} \geq 0.
$$
\item There exists $\de_0>0$, so that
\begin{equation}
\label{650}
\ch|_{span\{\eta_0, Ker(\ch)\}^\perp}\geq \de_0 Id.
\end{equation}
Note that $Ker(\ch)=\{0\}$ is allowed.
\item There is $\xi_0\in Ker(\ch)^\perp, \xi_0 \neq 0$, so that
\begin{equation}
\label{600}
\dpr{\ch^{-1} \xi_0}{\xi_0}<0.
\end{equation}
\end{itemize}
Then,
$$
\ch|_{\xi_0^\perp}\geq 0.
$$
\end{lemma}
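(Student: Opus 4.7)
My plan is to argue by contradiction via a constrained variational problem, reducing to a sign analysis of the resolvent quadratic form
\[
F(z):=\langle (\ch-z)^{-1}\xi_0,\xi_0\rangle.
\]
I would set $\mu_*:=\inf\{\langle \ch f,f\rangle : \|f\|=1,\ f\perp \xi_0\}$ and suppose for contradiction that $\mu_*<0$. Since \eqref{650} pins the essential spectrum of $\ch$ above $\delta_0>0$ and only finitely many eigenvalues lie below that, strict negativity of $\mu_*$ lets me extract a minimizer $f_*$ from any minimizing sequence by a standard compactness argument. The Lagrange multiplier equation then reads
\[
\ch f_*=\mu_* f_*+\tau \xi_0,\qquad \tau\in\mathbb R.
\]

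Before the case analysis I would record a key preliminary observation: hypothesis \eqref{600} forces $\langle\eta_0,\xi_0\rangle\neq 0$. Expanding in an eigenbasis $\{\psi_n\}$ of $\ch$ with eigenvalues $\lambda_n$, and using $\xi_0\perp \text{Ker}(\ch)$ to kill the zero-eigenvalue contribution,
\[
\langle \ch^{-1}\xi_0,\xi_0\rangle=-\frac{|\langle\eta_0,\xi_0\rangle|^2}{\sigma^2}+\sum_{\lambda_n\geq\delta_0}\frac{|\langle \xi_0,\psi_n\rangle|^2}{\lambda_n};
\]
if $\langle\eta_0,\xi_0\rangle=0$, only the non-negative tail survives, violating \eqref{600}.

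Next I would split on the value of $\mu_*$. When $\mu_*=-\sigma^2$, the identity $(\ch+\sigma^2)f_*=\tau\xi_0$ places $\xi_0\in \text{Range}(\ch+\sigma^2)=\eta_0^\perp$, contradicting $\langle\eta_0,\xi_0\rangle\neq 0$. Otherwise $\mu_*$ sits in the resolvent set of $\ch$, so $f_*=\tau(\ch-\mu_*)^{-1}\xi_0$; ruling out $\tau=0$ (which would make $f_*$ a $\mu_*$-eigenvector and force $\mu_*=-\sigma^2$), the orthogonality $f_*\perp \xi_0$ reduces to $F(\mu_*)=0$.

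Finally I would contradict $F(\mu_*)=0$ by a sign analysis. Writing $F(z)=\sum_n |\langle\xi_0,\psi_n\rangle|^2/(\lambda_n-z)$, for $z<-\sigma^2$ every denominator is positive and hence $F(z)>0$, while on $(-\sigma^2,0]$ the derivative is strictly positive and $F(0)=\langle\ch^{-1}\xi_0,\xi_0\rangle<0$, forcing $F<0$ throughout. The hard part is the excluded value $\mu_*=-\sigma^2$, where $\ch-\mu_*$ fails to be invertible and the resolvent argument breaks down; this is exactly what the projection observation extracted from \eqref{600} is designed to handle.
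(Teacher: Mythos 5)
Your argument is correct, but it is a genuinely different route from the paper's. The paper proves the lemma directly: it decomposes an arbitrary $z\in\xi_0^\perp$ as $z=\eta_0+\psi_0+\psi$ with $\psi_0\in Ker(\ch)$ and $\psi$ in the positive spectral subspace, reduces the claim to the lower bound $\dpr{\ch\psi}{\psi}\geq\si^2$ under the linear constraint $\dpr{\psi}{\xi_0}=-\dpr{\eta_0}{\xi_0}$, and obtains that bound by a Cauchy--Schwarz inequality with respect to the spectral measure, using \eqref{600} to control $\dpr{\ch^{-1}P_{X_+}\xi_0}{P_{X_+}\xi_0}$. You instead run the classical constrained-minimization / Lagrange-multiplier argument and analyze the sign of $F(z)=\dpr{(\ch-z)^{-1}\xi_0}{\xi_0}$ on $[-\si^2,0]$; this is the standard ``negative index of the constrained operator'' computation from the stability literature. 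Both hinge on the same two facts extracted from \eqref{600}, namely $\dpr{\eta_0}{\xi_0}\neq 0$ and $F(0)<0$. What the paper's route buys is economy: it needs no existence theory for minimizers and no resolvent analysis, only Cauchy--Schwarz, and it transparently feeds into the inductive proof of Theorem \ref{theo:200}. What your route buys is generality: the function $F$ (and its matrix-valued analogue) counts exactly how many negative directions survive the constraint, so it extends naturally to the finite-codimension setting and to cases where positivity fails. Two points in your write-up deserve tightening: the sums over an eigenbasis should be integrals against the spectral measure, since \eqref{650} permits continuous spectrum in $[\de_0,\infty)$; and in the case $\mu_*=-\si^2$ a minimizer need not exist (minimizing sequences converge strongly to $\pm\eta_0$, which violates the constraint precisely because $\dpr{\eta_0}{\xi_0}\neq 0$), so that case should be excluded by this direct argument rather than by invoking the Euler--Lagrange equation for a minimizer that may not be there.
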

\begin{proof}(Lemma \ref{lina})
Without loss of generality, we may assume that $\|\xi_0\|=\|\eta_0\|=1$. Denote the positive invariant subspace of $\ch$ by $X_+$. That is, $X_+:=span\{\eta_0, Ker(\ch)\}^\perp$. Note $\ch|_{X_+}\geq \de_0$.
We take  $z\in \xi_0^\perp$ in the form
\begin{equation}
\label{620}
z=\eta_0+\psi_0+\psi, \psi_0\in Ker(\ch), \psi\in X_+.
\end{equation}
Clearly, since $\xi_0\in Ker(\ch)^\perp$,
$$
0=\dpr{z}{\xi_0}=\dpr{\eta_0}{\xi_0}+\dpr{\psi_0}{\xi_0}+\dpr{\psi}{\xi_0}=\dpr{\eta_0}{\xi_0}+\dpr{\psi}{\xi_0}
$$
Denote $\al=\dpr{\eta_0}{\xi_0}$, so that $\dpr{\psi}{\xi_0}=-\al$. Note that $\al\neq 0$, since otherwise, $\xi_0\in \eta_0^\perp$ and hence $\dpr{\ch^{-1} \xi_0}{\xi_0}\geq 0$, a contradiction with  \eqref{600}.

It clearly will suffice to prove that
$\dpr{\ch z}{z}\geq 0$, since $z$ is normalized so that $\dpr{z}{\eta_0}=1$, but otherwise an arbitrary element of $X_+$.  We have
$$
\dpr{\ch z}{z}=\dpr{\ch \eta_0}{\eta_0}+\dpr{\ch \psi}{\psi}=-\si^2+ \dpr{\ch \psi}{\psi}.
$$
Thus, it  remains to prove that $ \dpr{\ch \psi}{\psi}\geq \si^2$, whenever $\psi\in X_+: \dpr{\psi}{\xi_0}=-\al$.
To this end, consider the positive spectrum of $\ch$, that is $\si_+(\ch) :=\si(\ch|_{X_+})$. Note that by \eqref{650}, $\si_+(\ch)\subset [\de_0, \infty)$.
Consider the spectral decomposition of $\ch|_{X_+}$. We have a family of projections $E_\la in B(X_+)$, so that
for every $f\in X_+$ and every measurable  function $\chi$ on $\si_+(\ch)$, we have
$$
\chi(\ch) f=\int_{\si_+(\ch)} \chi(\la) dE_\la f.
$$
In particular,
$$
\psi=\int_{\si_+(\ch)}   dE_\la \psi.
$$
It follows that
\begin{eqnarray*}
|\al| =|\dpr{\psi}{\xi_0}|=|\int_{\si_+(\ch)}   d\dpr{E_\la \psi}{\xi_0}|\leq \int_{\si_+(\ch)}   d |\dpr{E_\la \psi}{\xi_0}|
\end{eqnarray*}
Now, by Cauchy-Schwartz's inequality and since $E_\la=E_\la^2$, we have
$$
|\dpr{E_\la \psi}{\xi_0}|=|\dpr{E_\la \psi}{E_\la \xi_0}|\leq \|E_\la \psi\| \|E_\la \xi_0\|=\sqrt{\dpr{E_\la \psi}{\psi}}
\sqrt{\dpr{E_\la \xi_0}{\xi_0}}
$$
Thus, by Cauchy-Schwartz and the properties of the spectral decomposition, we have
\begin{eqnarray*}
|\al| &\leq & \left( \int_{\si_+(\ch)}  \la d \dpr{E_\la \psi}{\psi}\right)^{1/2}
\left( \int_{\si_+(\ch)} \f{1}{\la} d \dpr{E_\la \xi_0}{\xi_0}\right)^{1/2} = \\
&=& \sqrt{\dpr{\ch \psi}{\psi}}
\sqrt{\dpr{\ch^{-1} P_{X_+}[\xi_0]}{P_{X_+}[\xi_0]}}.
\end{eqnarray*}
Note that since we required $\xi_0\perp Ker(\ch)$, we have that
$$
P_{X_+}[\xi_0]=P_{\eta_0^\perp} \xi_0=\xi_0-\dpr{\xi_0}{\eta_0} \eta_0=\xi_0-\bar{\al} \eta_0
$$
Thus,
\begin{eqnarray*}
\dpr{\ch^{-1} P_{X_+}[\xi_0]}{P_{X_+}[\xi_0]} &=& \dpr{\ch^{-1}[\xi_0-\bar{\al} \eta_0]}{[\xi_0-\bar{\al} \eta_0]}=\\
&=&
\dpr{\ch^{-1} \xi_0}{\xi_0} - 2\Re (\al \dpr{\ch^{-1} \xi_0}{\eta_0}) + |\al|^2 \dpr{\ch^{-1} \eta_0}{\eta_0}\leq \\
&=&  2\Re (\al \bar{\al}  \si^{-2}) - |\al|^2 \si^{-2} = \f{|\al|^2}{\si^2},
\end{eqnarray*}
where we have used the crucial inequality \eqref{600}. Plugging this result back in the inequality for $|\al|$, we have
$$
|\al|\leq \sqrt{\dpr{\ch \psi}{\psi}}  \f{|\al|}{\si}.
$$
Taking into account that $\al\neq 0$, we conclude $\dpr{\ch \psi}{\psi}\geq \si^2$, as required.

\end{proof}

   \subsection{ Positivity on a finite co-dimension  subspace}
 \label{sec:3.2}
In this section, we generalize the co-dimension one result to general co-dimensions.  \begin{theorem}
\label{theo:200}
Let $\cl$ has $k, k\geq 1$ negative eigenvalues, with $Z_0=span\{\eta_1, \ldots, \eta_k\}$. Assume that for some $\de_0>0$ , we have
$$
\ch|_{span\{Z, Ker(\ch)\}}\geq \de_0
$$
and for   some subspace $Z: dim(Z)\geq k, Z\perp Ker(\ch)$, $\ch$ satisfies
\begin{equation}
\label{z:300}
\ch^{-1}|_{Z}\leq -\de_0.
\end{equation}
Then,
$$
\ch|_{Z^\perp}\geq 0.
$$
\end{theorem}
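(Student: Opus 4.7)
The strategy extends the proof of Lemma \ref{lina} by packaging the $k$ orthogonality constraints into a single carefully-chosen test vector. I interpret the first hypothesis (by analogy with Lemma \ref{lina}) as $\ch|_{X_+}\ge\de_0$, where $X_+:=\mathrm{span}\{Z_0,Ker(\ch)\}^\perp$ and $Z_0=\mathrm{span}\{\eta_1,\dots,\eta_k\}$ is spanned by orthonormal eigenvectors of $\ch$ for the negative eigenvalues $-\si_1^2,\dots,-\si_k^2$.

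First I would establish a \emph{dimension rigidity}. On $Ker(\ch)^\perp$ the operator $\ch^{-1}$ has Morse index exactly $k$, so the min--max principle forces any subspace on which $\ch^{-1}$ is strictly negative definite to have dimension at most $k$; combined with the hypothesis $\dim Z\ge k$ this yields $\dim Z=k$. Fix an orthonormal basis $\{\xi_j\}_{j=1}^{k}$ of $Z$ and introduce the $k\times k$ matrix $M_{ji}:=\dpr{\eta_i}{\xi_j}$. Invertibility of $M$ follows from $\ch^{-1}|_Z\le-\de_0$: were $P_{Z_0}\xi=0$ for some nonzero $\xi\in Z$, one would have $\dpr{\ch^{-1}\xi}{\xi}\ge 0$, contradicting the hypothesis.

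For an arbitrary $z\in Z^\perp\cap D(\ch)$ decompose $z=\sum_i a_i\eta_i+\psi_0+\psi$ with $\psi_0\in Ker(\ch)$ and $\psi\in X_+$. Since $\dpr{\ch z}{z}=-\sum_i\si_i^2 a_i^2+\dpr{\ch\psi}{\psi}$, the conclusion reduces to proving $\dpr{\ch\psi}{\psi}\ge S$, where $S:=\sum_i\si_i^2 a_i^2$. The constraints $\dpr{z}{\xi_j}=0$ translate (using $\xi_j\perp Ker(\ch)$) into $(Ma)_j+\dpr{\psi}{\xi_j}=0$. Now construct the test vector $v:=\sum_j c_j\,P_{X_+}\xi_j\in X_+$ with $c:=M^{-T}S_+a$, where $S_+:=\mathrm{diag}(\si_i^2)$; a short calculation gives $\dpr{\psi}{v}=-a^TM^Tc=-S$. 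Applying Cauchy-Schwartz in the spectral representation of $\ch|_{X_+}$ (whose spectrum lies in $[\de_0,\infty)$),
\[S^2=|\dpr{\psi}{v}|^2=|\dpr{\ch^{1/2}\psi}{\ch^{-1/2}v}|^2\le\dpr{\ch\psi}{\psi}\cdot\dpr{\ch^{-1}v}{v}.\]

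Finally, using $P_{X_+}\xi_j=\xi_j-\sum_l M_{jl}\eta_l$ and $\ch^{-1}\eta_l=-\si_l^{-2}\eta_l$, a routine expansion yields
\[\dpr{\ch^{-1}v}{v}=c^TDc+c^TMS_-M^Tc,\qquad D_{ij}:=\dpr{\ch^{-1}\xi_i}{\xi_j},\ S_-:=\mathrm{diag}(\si_i^{-2}).\]
The identity $M^Tc=S_+a$ combined with the telescope $S_+S_-S_+=S_+$ gives $c^TMS_-M^Tc=a^TS_+a=S$, while the hypothesis $D\le-\de_0 I$ gives $c^TDc\le 0$. Hence $\dpr{\ch^{-1}v}{v}\le S$, and the Cauchy-Schwartz estimate collapses to $S^2\le S\cdot\dpr{\ch\psi}{\psi}$; division by $S$ (or the trivial case $S=0$, which forces $a=0$) finishes the argument. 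The main obstacle is the engineering of $v$: one must find a single element of the $k$-parameter family $\{P_{X_+}\xi_j\}$ that simultaneously realizes the target inner product $\dpr{\psi}{v}=-S$ and satisfies the matching upper bound $\dpr{\ch^{-1}v}{v}\le S$, and it is the algebraic miracle $S_+S_-S_+=S_+$ that makes the two bounds fit together exactly.
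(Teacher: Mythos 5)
Your proof is correct, but it takes a genuinely different route from the paper. The paper argues by induction on $k$: given $\tilde z\in Z^\perp$ it picks a unit vector $\tilde\eta\in span\{\eta_1,\eta_2\}$ orthogonal to $\tilde z$, passes to the compressed operator $P_{\tilde\eta^\perp}\ch P_{\tilde\eta^\perp}$ (which has exactly $k$ negative eigenvalues and an enlarged kernel) and the projected subspace $P_{\tilde\eta^\perp}Z$, and reduces to the case $k=1$, i.e.\ to Lemma \ref{lina}; this is precisely why the statement is phrased with $\dim(Z)\ge k$ rather than equality, since the projection may drop a dimension. You instead run the Lemma \ref{lina} argument directly in all $k$ dimensions at once: the decomposition $z=\sum_i a_i\eta_i+\psi_0+\psi$, the engineered test vector $v=\sum_j c_j P_{X_+}\xi_j$ with $c=M^{-T}S_+a$, and the spectral Cauchy--Schwarz inequality reduce everything to the identity $S_+S_-S_+=S_+$ together with $\dpr{\ch^{-1}\cdot}{\cdot}|_Z\le 0$. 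I checked the key computation $\dpr{\ch^{-1}v}{v}=c^TDc+c^TMS_-M^Tc$ and it is right (the cross term $-2\dpr{\ch^{-1}w}{P_{Z_0}w}$ contributes $+2c^TMS_-M^Tc$, and the diagonal term contributes $-c^TMS_-M^Tc$). Your approach buys a few things the induction does not: it shows that the hypotheses force $\dim Z=k$ exactly (via the Morse index of $\ch^{-1}$ on $Ker(\ch)^\perp$), it makes the invertibility of the coupling matrix $M_{ji}=\dpr{\eta_i}{\xi_j}$ explicit, and it reveals that only $\ch^{-1}|_Z<0$ (strict negativity, not the uniform bound $-\de_0$) is actually used, since the uniform constant enters only through $c^TDc\le 0$ and the nondegeneracy of $M$. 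The paper's induction is shorter to state but hides the quantitative structure; yours is self-contained and does not need the base-case lemma as a black box, though it essentially reproves it in matrix form. One small caveat: you should state explicitly that the first hypothesis is read as $\ch|_{span\{Z_0,Ker(\ch)\}^\perp}\ge\de_0$ (as in \eqref{650}); the displayed hypothesis in the theorem contains a typo ($Z$ for $Z_0$ and a missing $\perp$), and your interpretation is the intended one.
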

{\bf Note:} Clearly, in the applications, one would like to apply Theorem \ref{theo:200} for
subspaces $Z$ with minimal dimension, that is $Z: dim(Z)=k$. This is allowed in the current formulation. We however prefer to state it with inequality for technical reasons, to be discussed below.
\begin{proof}
The proof proceeds by an induction argument on $k$.

For $k=1$, $Z$ is one dimensional, hence $Z=span\{\xi_0\}$ for some $\xi_0$ and the result is exactly Lemma \ref{lina}. Assume that we have proved it for some $k$ and consider an operator $\ch$ with $k+1\geq 2$ eigenvalues, $\eta_1, \ldots, \eta_{k+1}$ and $Z:dim(Z)\geq k+1$ is subspace.

Take an arbitrary element $0\neq \tilde{z}\in Z^\perp$. We claim that there exists an element $\tilde{\eta}: \|\tilde{\eta}\|=1$, with $\tilde{\eta}\in span\{\eta_1, \eta_2\}\subset   Z_0$ and $\tilde{z}\perp \tilde{\eta}$. Indeed, either $\tilde{z}\perp \eta_1$ or $\tilde{z}\perp \eta_2$,  in either of which cases we are fine   or $\tilde{z}$ is perpendicular to some linear combination of $\eta_1, \eta_2\subset Z_0$, say $\tilde{\eta}=a\eta_1+b \eta_2$, where $a^2+b^2=1$ by the normalization.
Consider the projection operator on to $\tilde{\eta}^\perp$ given by
$$
P_{\tilde{\eta}^\perp} f=f- \dpr{f}{\tilde{\eta}} \tilde{\eta}
$$
Consider the self-adjoint operator $\tilde{\ch}:=P_{\tilde{\eta}^\perp} \ch P_{\tilde{\eta}^\perp}$ and the subspace $\tilde{Z}:=P_{\tilde{\eta}^\perp} Z$. It is clear that $dim(\tilde{Z})\geq k$, since we project away at most one dimension.

We claim that $\tilde{\ch}$ has at most $k$ eigenvalues. This is completely obvious if for example $\tilde{\eta}=\eta_1$, since then $\tilde{\ch}=\ch|_{span\{\eta_2, \ldots, \eta_{k+1}\}}$, similar if
$\tilde{\eta}=\eta_2$. In the general case, we argue that the negative subspace of $\tilde{\ch}$ is
spanned by $\{-b\eta_1+a\eta_2, \eta_3, \ldots, \eta_{k+1}\}$. In fact, one can even explicitly compute  the $k$ negative eigenvalues of $\tilde{\ch}$ as follows\footnote{recall $a^2+b^2=1$}
\begin{equation}
\label{320}
 -\si_1^2 b^2 -\si_2^2 a^2, -\si_3^2, \ldots, -\si_{k+1}^2,
\end{equation}
where we have used the notation $-\si_j^2$ for the $j^{th}$ negative eigenvalue (i.e. $\ch \eta_j = -\si_j^2 \eta_j$). Note that the operator $\tilde{\ch}$ still has  $k+1$ non-positive eigenvalues, as it should! These are the $k$ listed in \eqref{320} and the newly generated zero eigenvalue, with eigenvector $\tilde{\eta}$. In this regard,  note $Ker(\tilde{\ch})=span\{Ker(\ch), \tilde{\eta}\}$.

We can now apply the induction hypothesis to $\tilde{\ch}$ and $\tilde{Z}$. Indeed, observe first that $\tilde{Z}\perp Ker(\ch)$, since $\tilde{Z}\subset Z$. In addition, $\tilde{Z}\perp \tilde{\eta}$ by construction. Thus, $\tilde{Z}\perp Ker(\tilde{\ch})$. In addition,   let $z\in \tilde{Z}\subset Z$. Then,
$z\perp \tilde{\eta}$ and
$$
\dpr{\tilde{\ch}^{-1} z}{z}=\dpr{\ch^{-1} z}{z}\leq -\de_0\|z\|^2,
$$
by the requirement \eqref{z:300} for $\ch$ and $Z$. This is \eqref{z:300} for the pair $\tilde{\ch}$ and $\tilde{Z}$. From the induction step, we conclude
$$
\tilde{\ch}|_{\tilde{Z}^\perp}\geq 0.
$$
In particular, for the arbitrary element $\tilde{z}\in Z^\perp$ that we have started with, we had $\tilde{z}\perp \tilde{\eta}$ and hence $\tilde{z}=P_{\tilde{\eta}^\perp} \tilde{z}$. We conclude
$$
 \dpr{\ch \tilde{z}}{\tilde{z}}=\dpr{\tilde{\ch} \tilde{z}}{\tilde{z}}\geq 0,
$$
which finishes the proof of the induction step and hence Theorem \ref{theo:200}.
\end{proof}

\section{Spectral stability   for the   periodic waves in the quadratic case}
\label{sec:4}
In this section, we consider the stability of the waves constructed in Section \ref{sec:3}.
Our main result is
\begin{theorem}
\label{theo:50}
The waves described in \eqref{2.5} are spectrally stable for all wave speeds $c>0$.
\end{theorem}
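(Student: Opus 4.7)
My plan is to argue by contradiction, invoking the abstract positivity criterion Theorem \ref{theo:200}. The proof is driven by two algebraic identities: rewriting the profile equation \eqref{per1} in the form $L\Phi=0$ shows $\Phi\in\ker(L)$, while the elementary computation $L\cdot 1=\Phi-c$ combined with $L\Phi=0$ reveals that $c-\Phi$ is an eigenfunction of $L$ with eigenvalue $-c$. Comparison with the spectral inventory in Section \ref{sec:3} (which lists $\lambda_0<\lambda_1<\lambda_2=0$ simple) forces $\lambda_0=-c$ and $\ker(L)=\mathrm{span}\{\Phi\}$, while $c-\Phi$ is (proportional to) the ground state.

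Suppose for contradiction that some $\mu$ with $\Re\mu>0$ and a nontrivial $Z\in H^2[-M,M]\cap C^2(-M,M)$ satisfy $LZ=\mu Z'$. I would first extract two orthogonality constraints: pairing with $\Phi$ (using $L\Phi=0$ and integration by parts) gives $Z\perp\Phi'$; integrating the equation over the period gives $Z\perp(\Phi-c)$. A short computation, using that $\langle LZ,Z\rangle$ is real while $\langle Z',Z\rangle$ is purely imaginary, also forces $\langle LZ,Z\rangle=0$ when $\Re\mu\neq 0$.

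I then plan to apply Theorem \ref{theo:200} with $k=2$ and test subspace $Y=\mathrm{span}\{\Phi',\Phi-c\}$. Both generators are orthogonal to $\ker(L)=\mathrm{span}\{\Phi\}$: for $\Phi'$ this follows from parity, and for $\Phi-c$ from the identity $\int\Phi^2=c\int\Phi$, obtained by integrating the profile equation against $\Phi$. The hypothesis of Theorem \ref{theo:200} reduces to checking that the $2\times 2$ matrix $D_{ij}=\langle L^{-1}v_i,v_j\rangle$, with $v_1=\Phi'$ and $v_2=\Phi-c$, is strictly negative definite. Since $\Phi$ is even, parity ($\Phi'$ odd, $\Phi-c$ even, $L$ preserving parity) kills the off-diagonal, so $D_{12}=D_{21}=0$. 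For the even-even entry, the identification of $c-\Phi$ as the ground state gives $L^{-1}(\Phi-c)=-(\Phi-c)/c$, whence $D_{22}=-\|\Phi-c\|^2/c<0$ immediately.

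The principal technical obstacle is the sign of $D_{11}=\langle L^{-1}\Phi',\Phi'\rangle$. I would establish $D_{11}<0$ by exploiting the explicit elliptic-function structure: expand $\Phi'\propto sn\cdot cn\cdot dn$ in the basis of odd $2K(k)$-periodic eigenmodes of $L$ provided by Lam\'e spectral theory, and verify that the (negative) contribution from the mode $\phi_1\propto sn\cdot cn$ at the eigenvalue $\lambda_1<0$ dominates the positive contributions from higher-order odd periodic modes. Once negative definiteness of $D$ is in hand, Theorem \ref{theo:200} yields $L|_{Y^\perp}\geq 0$; combined with $Z\in Y^\perp$ and $\langle LZ,Z\rangle=0$, both $\Re Z$ and $\Im Z$ lie in the null space of $L|_{Y^\perp}$, i.e., $Z\in\ker(L)\cap Y^\perp=\mathrm{span}\{\Phi\}$. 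Writing $Z=\alpha\Phi$ and substituting back in $LZ=\mu Z'$ gives $0=\mu\alpha\Phi'$, forcing $\alpha=0$ and contradicting $Z\neq 0$.
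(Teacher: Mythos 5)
Your skeleton is sound and in places cleaner than the paper's own argument. The observation that $L[\Phi-c]=L[\Phi]-cL[1]=-c(\Phi-c)$, so that the nodeless function $\Phi-c$ is the ground state with eigenvalue $-c$, is correct and gives you $D_{22}=\dpr{L^{-1}(\Phi-c)}{\Phi-c}=-c^{-1}\|\Phi-c\|^2<0$ for free. (The paper instead asserts $\chi_0=\Phi-c_0(\kappa)$ with $c_0(\kappa)\neq c$ and uses this to upgrade $Z\perp\Phi-c$, $Z\perp\chi_0$ to $Z\perp\mathrm{span}\{1,\Phi,\Phi'\}$; your computation shows that extra orthogonality is not actually available, and your argument rightly never uses it.) Your unified treatment of real and complex $\mu$ through $\dpr{LZ}{Z}=\mu\dpr{Z'}{Z}$ with $\dpr{Z'}{Z}$ purely imaginary, followed by a single application of Theorem \ref{theo:200} on the codimension-two space $Y=\mathrm{span}\{\Phi',\Phi-c\}$, replaces the paper's two separate even/odd decomposition arguments (complex case) and its codimension-one Lemma \ref{lina} on $L^2_{odd}$ (real case). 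Two small repairs are needed: the orthogonality $\dpr{\Phi-c}{\Phi}=0$ comes from integrating the profile equation against $1$, not against $\Phi$ (or simply from $\Phi-c$ and $\Phi$ being eigenfunctions for distinct eigenvalues); and the passage from $\dpr{Lu}{u}=0$ with $u\in Y^\perp$ to $u\in\ker(L)$ only yields $Lu\in Y$ a priori --- you must add that $Lu=a\Phi'+b(\Phi-c)$ together with $u\perp Y$ forces $aD_{11}=bD_{22}=0$, hence $a=b=0$ by the non-degeneracy of $D$.

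The genuine gap is $D_{11}=\dpr{L^{-1}\Phi'}{\Phi'}<0$. This is exactly the quantity \eqref{700} on which the entire stability proof turns, and you offer only a plan (``expand $\Phi'$ in odd Lam\'e modes and check that the $\phi_1$ contribution dominates'') without carrying it out. The dominance is not automatic: the coefficient of $\Phi'$ along $\phi_1=sn\,cn$, the size of $\lambda_1<0$, and the higher odd periodic eigenvalues all depend on $k$, and the series must be controlled uniformly for $k\in(0,1)$. The paper does not find a soft argument either: it constructs a second element $\tilde\Psi(x)=\tilde\Phi(x)\int_0^x\tilde\Phi-3\tilde\Phi'(x)$ of $\ker\tilde L$, builds the periodic Green's function, reduces $\dpr{\tilde L^{-1}\tilde\Phi'}{\tilde\Phi'}$ to explicit elliptic integrals, and verifies the sign by symbolic computation and a plot over $k\in(0,1)$. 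Until you supply the corresponding estimate or computation, the proof is incomplete at its quantitative core.
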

Before we proceed with the proof of Theorem \ref{theo:50}, we would like to discuss  an interesting limiting case. To that end,  take $\ka\to 1-$ in the solution $\Phi$ \eqref{2.5} and  then apply the  transformation  \eqref{80}.  We obtain the so-called parabolic peakons  in the form
\begin{equation}
\label{40}
\lim_{\ka\to 1-} \Phi_\ka (\eta(\xi))= \vp(\xi)=\f{\xi^2}{6}-\f{c}{2}, c = \f{L^2}{9}, \ \ -L<\xi < L.
\end{equation}
This construction goes back to at least the late 70's and it was revisited in several publications, \cite{Ostrov, GOSS, Stepan, Boyd, GHJ}. In fact, the authors in \cite{GHJ} placed special emphasis of these explicit solutions and asked about further properties of these simple solutions.
It is    easy to directly check in \eqref{30} (see also \cite{GHJ}),  that  the function displayed in \eqref{40}
provides a solution to \eqref{30} in $(-L,L)$.  Note that this choice of $c$ ensures that $\vp(-L)=\vp(L)$. However, while  the $2L$  periodization of $\vp$ is clearly  a continuous function, it  is not a differentiable function at $\pm L$. Indeed, we have
$$
\lim_{\xi \to L_-} \vp'(\xi)=\f{L}{3}, \ \ \lim_{\xi \to L_+} \vp'(\xi)=-\f{L}{3},
$$
which clearly do not match.  Thus,
one obtains {\it a peakon  solution}, with corner crests at all points $(2k+1) L, k \in {\mathbf N}$.
It would be interesting to see whether (an appropriate notion of) stability  holds for these waves. Note however, that if one does not impose appropriate
periodicity assumptions on $z$, one finds that \eqref{70} does  in fact have solution for some $\mu>0$ and $z\in C^2(-L,L)$, see Section \ref{sec:6} for details.

Over the course of the next few sections, we give the proof of Theorem \ref{theo:50}.
\subsection{Proof of Theorem \ref{theo:50}: preliminaries}
According to the derivation of \eqref{205}, we will show that there does not exists $Z\in L^2[-M,M]$ and
$\la: \Re\la>0$, so that \eqref{205} holds.  That is,  the wave $\vp(\xi)=\Phi(\eta)$  is stable. In fact, we will show that there does not exists $\mu: \Re\mu\neq 0$ so that
\begin{equation}
\label{520}
L[Z]=-c^2 Z_{\eta\eta} - c Z +  \Phi Z =  \mu Z_\eta.
\end{equation}
That is, we will show that the spectrum of the linearized operator is on the imaginary axis.
  We have that  the operator $L$ from \eqref{400} has two negative eigenvalues, an eigenvalue at zero, all simple, while the rest of the spectrum is contained in $(\ka, \infty)$, for some positive  $\ka>0$. That is, to introduce some notations,
$$
\left\{
\begin{array}{l}
\sigma(L)=\{-\si_0^2\}\cup \{-\si_1^2\}\cup \{0\}\cup \si_{+}(L)\\
L \chi_0=-\si_0^2 \chi_0, L \chi_1=-\si_1^2 \chi_1, L[\Phi]=0.
\end{array}
\right.
$$
We shall need, towards the end, explicit formulas for these functions and eigenvalues. One can of course write them explicitly, according to \eqref{c:10}, \eqref{c:20} and \eqref{c:30}, but we will not do so for now. We shall need to observe couple of things - first, $\chi_0$ and $\Phi$ are even functions, while $\chi_1$ is an odd function, second - note the relation $\chi_0(x)=\Phi(x)-c_0(\ka)$. This is easily seen, if one compares  \eqref{c:30}  (which provides $\Phi$) and \eqref{c:10}, which describes $\chi_0$. Note specifically however that $c_0(\ka)\neq c$, the speed of the wave.

We argue by contradiction. Assuming that there is a solution $Z$ of \eqref{520} (with some $\mu: \Re \mu\neq 0$), we establish some properties. These are later used to obtain a contradiction.

We take a dot product of \eqref{520} with $1$. We obtain
$$
\dpr{1}{L[Z]}=\mu \dpr{Z'}{1}=0,
$$
whence $0=\dpr{L[1]}{Z}=\dpr{\Phi-c}{Z}$, which implies $Z\perp \Phi - c$. Next, take a dot product of \eqref{520} with $\Phi$. We obtain
$$
\mu \dpr{Z'}{\Phi}=\dpr{L[Z]}{\Phi}=\dpr{Z}{L[\Phi]}=0,
$$
whence $Z\perp \Phi'$.

Finally, take a dot product of \eqref{520} with $\chi_0$, the eigenfunction corresponding to the lowest eigenvalue.
We have
\begin{equation}
\label{z:400}
\mu \dpr{Z'}{\chi_0}=\dpr{L[Z]}{\chi_0}=\dpr{Z}{L[\chi_0]}=-\si_0^2 \dpr{Z}{\chi_0}
\end{equation}
On the other hand, recalling that $\chi_0'=\Phi'$, we conclude
\begin{equation}
\label{z:510}
\dpr{Z'}{\chi_0}=-\dpr{Z}{\chi_0'}=-\dpr{Z}{\Phi'}=0.
\end{equation}
Combining \eqref{z:400} and \eqref{z:510}, we obtain $ \dpr{Z}{\chi_0}=0$. We have shown that $Z\perp \Phi-c_0(\ka)$, $Z\perp \Phi - c$, where $c_0(\ka)\neq c=c(\ka)$. In addition, $Z\perp \Phi'$.  It follows immediately that
\begin{equation}
\label{530}
Z\perp span\{1, \Phi, \Phi'\},
\end{equation}
 Having this information will allows us to rule out oscillatory/complex instabilities.
\subsection{Proof of Theorem \ref{theo:50}: ruling out complex instabilities}
Assuming that the eigenvalue problem \eqref{520} has a solution $\mu=\mu_1+i \mu_2, \mu_1\neq 0, \mu_2\neq 0$ and $Z=u+i v$ ($(u,v)\neq (0,0)$ real valued functions), we reach a contradiction. Indeed, with this notations, \eqref{520} is equivalent to the system
\begin{equation}
\label{540}
\left|
\begin{array}{l}
L u= \mu_1 u'-\mu_2 v' \\
L v = \mu_2 u'+\mu_1 v '
\end{array}
\right.
\end{equation}
 Next, taking into account that the operator $L$ preserves the parity of the function, we further split $u=u_1+ u_2, v=v_1+v_2$, where $u_1, v_1$ are even functions and $u_2, v_2$ are odd functions.  Projecting \eqref{540} in even and odd parts, we arrive at
\begin{equation}
\label{550}
\left|
\begin{array}{l}
L u_1= \mu_1 u_2'-\mu_2 v_2' \\
L u_2 = \mu_1 u_1'-\mu_2 v_1 '\\
L v_1=\mu_2 u_2'+\mu_1 v_2' \\
L v_2= \mu_2 u_1'+\mu_1 v_1'
\end{array}
\right.
\end{equation}
We have by the self-adjointness of $L$ (and the reality of all functions involved)
\begin{equation}
\label{560}
\mu_1 \dpr{u_2'}{v_1}-\mu_2 \dpr{v_2'}{v_1}=\dpr{L u_1}{v_1}=\dpr{L v_1}{u_1}=\mu_2\dpr{u_2'}{u_1}+\mu_1\dpr{v_2'}{u_1}.
\end{equation}
Similarly,
\begin{equation}
\label{570}
\mu_1 \dpr{u_1'}{v_2} - \mu_2 \dpr{v_1'}{v_2}=\dpr{L u_2}{v_2}=\dpr{L v_2}{u_2}=\mu_2 \dpr{u_1'}{u_2}+\mu_1\dpr{v_1'}{u_2}.
\end{equation}
Adding \eqref{560} and \eqref{570} yields
$$
\mu_1(\dpr{u_2'}{v_1}+\dpr{u_1'}{v_2})=\mu_1(\dpr{v_2'}{u_1}+\dpr{v_1'}{u_2}).
$$
Taking into account $\mu_1\neq 0$ and $\dpr{v_1'}{u_2}=-\dpr{u_2'}{v_1}$ and $\dpr{u_1'}{v_2}=-(\dpr{v_2'}{u_1}$, we conclude that
\begin{equation}
\label{580}
\dpr{u_2'}{v_1}=\dpr{v_2'}{u_1}.
\end{equation}
Subtracting \eqref{570} from \eqref{560} yields
$$
\mu_2(\dpr{v_1'}{v_2}-\dpr{v_2'}{v_1})=\mu_2(\dpr{u_2'}{u_1}-\dpr{u_1'}{u_2}).
$$
Since $\mu_2\neq 0$, we have
\begin{equation}
\label{590}
\dpr{v_1'}{v_2}=\dpr{u_2'}{u_1}.
\end{equation}
Evaluating
\begin{equation}
\label{528}
\begin{array}{l l}
\dpr{L u_1}{u_1}+ \dpr{L v_1}{v_1}   & =   \mu_1\dpr{u_2'}{u_1}-\mu_2\dpr{v_2'}{u_1}+\mu_2\dpr{u_2'}{v_1}+
\mu_1 \dpr{v_2'}{v_1} = \\
& = \mu_1(\dpr{u_2'}{u_1}+ \dpr{v_2'}{v_1})+\mu_2(\dpr{u_2'}{v_1} - \dpr{v_2'}{u_1})=0,
\end{array}
\end{equation}
where in the last line, we have used \eqref{580} and \eqref{590}.
Now, the condition \eqref{530} implies in particular that
\begin{equation}
\label{525}
u_1, v_1 \perp span\{\chi_0, \Phi, \chi_1\}.
\end{equation}
Indeed, since $u_1, v_1$ are even and $\chi_1$ is odd, we have $u_1, v_1 \perp \chi_1$. On the other hand, by \eqref{530}, we know that $u_1+u_2=\Re Z \perp span\{\chi_0, \Phi\}=span\{1, \Phi\}$. Observe that by parity considerations,
$u_2\perp span\{\chi_0, \Phi\}=span\{1, \Phi\}$.
 Thus, it must be that $u_1 \perp span\{\chi_0, \Phi\}$. Similarly,
$v_1 \perp span\{\chi_0, \Phi\}$, which is \eqref{525}.

By  \eqref{525} and the structure of the spectrum of $L$,  it follows that $u_1, v_1$ lie in the positive subspace of $L$, whence
$$
0=\dpr{L u_1}{u_1}+ \dpr{L v_1}{v_1}\geq \de(\|u_1\|^2+\|v_1\|^2).
$$
Clearly, this implies $u_1=v_1=0$. Going back to \eqref{550}, we obtain, for some constants $c_1, c_2$,
$$
\left|
\begin{array}{l}
\mu_1 u_2 - \mu_2 v_2=c_1\\
\mu_2 u_2+\mu_1 v_2=c_2
\end{array}
\right.
$$
Again, since $\mu_1\neq 0, \mu_2\neq 0$ implies that $u_1=C_1, u_2=C_2$ for some constants $C_1, C_2$. But then
$$
C_1 L[1]=0, C_2 L[1]=0,
$$
which combined with $L[1]=\Phi-c\neq 0$ implies  $C_1=C_2=0$. Thus, we have reached the zero solution, a contradiction.

\subsection{Proof of Theorem \ref{theo:50}: ruling out real instabilities}
Having this lemma in mind, we can rule out real instabilities for the eigenvalue problem \eqref{520}, provided, we know that
\begin{equation}
\label{700}
\dpr{L^{-1}[\Phi']}{[\Phi']}<0.
\end{equation}
 Indeed, assume the validity of \eqref{700} and assume, for a contradiction, that for some $\mu>0$ and $Z$ real-valued, we have \eqref{520}. As it was established in \eqref{530}, we have that $Z\perp \chi_0, \Phi, \Phi'$. Split   in even and odd functions as before:  $Z=u+v$, where $u$ is even and $v$ is odd. The eigenvalue problem reduces to
$$
\left|
\begin{array}{l}
L u = \mu v' \\
L v=\mu u'
\end{array}
\right.
$$
Taking dot products with $u$ and $v$ respectively and adding yields
$$
\dpr{Lu}{u}+\dpr{L v}{v}=\mu(\dpr{v'}{u}+\dpr{u'}{v})=0.
$$
As before, $u\perp \chi_0, \chi_1, \Phi$ and hence $\dpr{L u}{u}>0$ (unless $u=0$, in which case, the contradiction is obvious  right away). Thus, it follows that $\dpr{L v}{v}<0$.

We will show that this last inequality leads to a contradiction as well.
Indeed, consider the Hilbert space
$X_0:=L^2_{odd}$, which is clearly an invariant subspace for $L$. We are in a position to apply Lemma \ref{lina} to the operator $\ch:=L$, acting on $X_0=L^2_{odd}$ with $\eta_0:=\chi_1$. Clearly, $\ch|_{L^2_{odd}\cap \{\chi_1\}^\perp}\geq 0$. Take $\xi_0:=\Phi'\in X_0$. Clearly,
$\xi_0 \perp Ker(L)=span\{\Phi\}$. In addition, we assume \eqref{700}. Thus, by the conclusion of Lemma \ref{lina}, we will have
\begin{equation}
\label{710}
L|_{L^2_{odd}\cap \{\Phi'\}^\perp}\geq 0.
\end{equation}
On the other hand, $Z=u+v \perp \Phi'$, so it follows that $v\perp \Phi'$ (since $u\perp \Phi'$ by parity).  Of course $v\in L^2_{odd}$, while $\dpr{L v}{v}<0$, a contradiction with \eqref{710}.

\subsection{Computing $\dpr{L^{-1} [\Phi']}{[\Phi']}$}
Due to the rescaling properties, it will suffice to work with the following operators and functions
\begin{eqnarray*}
\be(k) &:= & 1+k^2+\sqrt{1-k^2+k^4}; \\
\ga(k) &:=& \f{6k^2}{\be(k)} -2 \be(k); \\
\tilde{\Phi}(y,k) &:= & 6 k^2 sn^2(y,k) - \f{6k^2}{\be(k)};\\
\tilde{L} &:= & -\p_{yy}+\ga(k)+\tilde{\Phi} \ \ \ 0\leq y\leq 2 K(k).
\end{eqnarray*}
We have $\tilde{L}[\tilde{\Phi}]=0$.
 Our goal is to show \eqref{700}, which is equivalent to
\begin{equation}
\label{800}
\dpr{\tilde{L}^{-1}[\tilde{\Phi}']}{[\tilde{\Phi}']}<0.
\end{equation}
In order to compute $\tilde{L}^{-1}$, we need to construct its Green function. This is achieved by finding another non-trivial solution $\tilde{\Psi}: \tilde{L}[\tilde{\Psi}]=0$. There are methods for constructing these, roughly by looking at a variation of constants  formula like
$$
\tilde{\Psi}(x)= \tilde{\Phi}(x)\int^{x} \f{1}{\tilde{\Phi}^2(y)} dy.
$$
It turns out that this, while possible formally, leads to some issues because of the vanishing in the denominator of the integral. So, we just postulate
$$
\tilde{\Psi}(x):=\tilde{\Phi}(x)\int_0^{x} \tilde{\Phi}(y) dy - 3 \tilde{\Phi}'(x).
$$
Let us verify that this indeed satisfies $\tilde{L}[\tilde{\Psi}]=0$. By differentiating this identity, we obtain $\tilde{L}[\tilde{\Phi}']=-\tilde{\Phi}' \tilde{\Phi}$. We have
\begin{eqnarray*}
\tilde{L}[\tilde{\Psi}]=\left(\tilde{L}[\tilde{\Phi}] \int_0^{x} \tilde{\Phi}(y) dy\right)  -2 \tilde{\Phi}' \tilde{\Phi}-
\tilde{\Phi} \tilde{\Phi}'-3 \tilde{L}[\tilde{\Phi}']=-3 \tilde{\Phi}' \tilde{\Phi}+3 \tilde{\Phi}' \tilde{\Phi}=0.
\end{eqnarray*}
where we have used $\tilde{L}[\tilde{\Phi}]=0$. It is not hard to check that the function $\tilde{\Psi}$ is odd function in $[-K(k), K(k)]$.
Define the ($x$ independent)  Wronskian
$$
W[k]=\tilde{\Psi}'(x,k)\tilde{\Phi}(x,k)- \tilde{\Phi}'(x,k)\tilde{\Psi}(x,k).
$$

One can now construct the inverse of $\tilde{L}$ as follows. Namely,  for a function $f\perp Ker[\tilde{L}]=span[\tilde{\Phi}]$,
$$
L^{-1} f = \f{1}{W[k]}\left(\tilde{\Phi}(x) \int_{-K(k)} ^x \tilde{\Psi}(y) f(y) dy -  \tilde{\Psi}(x) \int_{-K(k)}^x \tilde{\Phi}(y) f(y) dy  \right)+ C_f \tilde{\Psi}(x),
$$
where the constant $C_f$ is chosen so that $L^{-1}[f]$ is $2 K(k)$ periodic. In our case, $f=\tilde{\Phi}'$ a we obtain the following formula for  $C_{\tilde{\Phi}'}$
$$
C_{\tilde{\Phi}'}= - \f{ \tilde{\Phi}(K(k))}{W[k]\tilde{\Psi}(K(k))}  \int_0^{K(k)} \tilde{\Psi}(y)\tilde{\Phi}'(y) dy.
$$
Thus, we have, after some elementary integration by parts and using the periodicity of $\tilde{\Phi}$
\begin{eqnarray*}
\dpr{\tilde{L}^{-1}[\tilde{\Phi}']}{[\tilde{\Phi}']} &=&
\f{2}{W[k]}\left( \tilde{\Phi}^2(K(k)) \int_0^{K(k)} \tilde{\Psi}(y) \tilde{\Phi}'(y) dy   - \int_0^{K(k)} \tilde{\Phi}^2(y)\tilde{\Psi}(y) \tilde{\Phi}'(y) dy
  \right) - \\
&-& \f{2}{W[k]}   \f{  \tilde{\Phi}(K(k))}{ \tilde{\Psi}(K(k))}  \left(\int_0^{ K(k)} \tilde{\Psi}(y)\tilde{\Phi}'(y) dy\right)^2.
\end{eqnarray*}
Clearly, matters reduce to the computation of the following   integrals
$$
 \int_0^{K(k)} \tilde{\Psi}(y) \tilde{\Phi}'(y) dy, \ \ \int_0^{K(k)} \tilde{\Phi}^2(y)\tilde{\Psi}(y) \tilde{\Phi}'(y) dy.
$$
We have
\begin{eqnarray*}
I_1(k) &:=  & \int_0^{K(k)} \tilde{\Psi}(y) \tilde{\Phi}'(y) dy= -3 \int_0^{K(k)}  (\tilde{\Phi}'(y))^2  dy  - \f{1}{2} \int_0^{K(k)} (\tilde{\Phi}(y))^3 dy+ \\
&+& \f{\tilde{\Phi}^2(K(k))}{2} \int_0^{K(k)} \tilde{\Phi}(y) dy.
\end{eqnarray*}
On the other hand,
\begin{eqnarray*}
I_2(k) &:=  &  \int_0^{K(k)} \tilde{\Phi}^2(y)\tilde{\Psi}(y) \tilde{\Phi}'(y) dy=-3
\int_0^{K(k)} \tilde{\Phi}^2(y)  (\tilde{\Phi}'(y))^2 dy - \f{1}{4} \int_0^{K(k)} \tilde{\Phi}^5(y) dy + \\
&+& \f{\tilde{\Phi}^4(K(k))}{4} \int_0^{K(k)} \tilde{\Phi}(y) dy.
\end{eqnarray*}
Thus, we may write now,
 \begin{eqnarray*}
\dpr{\tilde{L}^{-1}[\tilde{\Phi}']}{[\tilde{\Phi}']} &=& \f{2}{W(k)}\left(\tilde{\Phi}^2(K(k)) I_1(k) - I_2(k)- \f{  \tilde{\Phi}(K(k))}{ \tilde{\Psi}(K(k))} (I_1(k))^2 \right).
\end{eqnarray*}
This last expression is clearly a function of $k$ only. While one can in principle compute this expression explicitly by hand, we have used Mathematica for a symbolic integration. Here are some of the formulas that we have found.
The Wronskian is a positive function  on $[0,1]$, given by
$$
W[k]=16 (2 (\sqrt{k^4-k^2+1}-1)+k^4(2
   \sqrt{k^4-k^2+1}+3-2k^2)-2k^2 \sqrt{k^4-k^2+1}+3k^2).
$$
 while $h(k)=\tilde{\Phi}^2(K(k)) I_1(k) - I_2(k)- \f{  \tilde{\Phi}(K(k))}{ \tilde{\Psi}(K(k))} (I_1(k))^2=\f{A(k)}{B(k)}$ where
\begin{eqnarray*}
B(k) &=& (k^2+\sqrt{k^4-k^2+1}+1)^4
   ((k^2+\sqrt{k^4-k^2+1}+1) E(k)-
(\sqrt{k^4-k^2+1}+1) K(k))
\end{eqnarray*}
  $A(k)$ is a little complicated to display, so we don't provide the explicit formula  here.
We can however plot the graph of this function for $k\in (0,1)$, see Figure \ref{fig1} below.  Since obviously the function is negative for all values of $k\in (0,1)$, we conclude that \eqref{800} holds and thus, Theorem \ref{theo:50} is proved in full.
\begin{figure}[h]
\centering
\includegraphics[width=8cm,height=6cm]{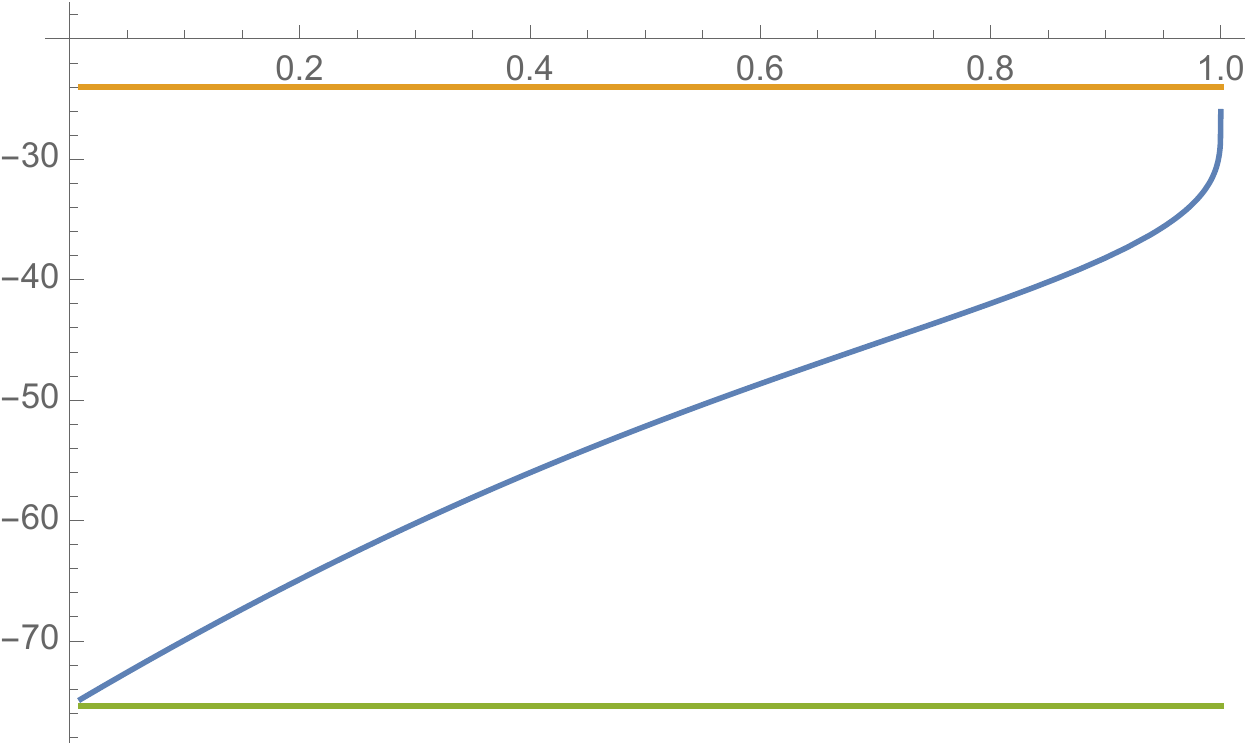}
\caption{The function $2 h(k)/W(k)$ in blue. The yellow line is at $y=-24=\lim_{k\to 1-} \frac{2 h(k)}{W(k)}$, while the red line is at $y=-24\pi=\lim_{k\to 0+} \frac{2 h(k)}{W(k)}$}
\label{fig1}
\end{figure}
            \vspace{1cm}

 \section{Spectral stability for the periodic waves in the cubic model}
\label{sec:10}
Our main result for the cubic model is the following.
\begin{theorem}
\label{theo:100}
The waves constructed in \eqref{per14} and \eqref{per15} are spectrally stable with respect to co-periodic  perturbations.
\end{theorem}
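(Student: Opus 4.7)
The plan is to follow the template of Theorem \ref{theo:50}, adjusted for the different parity structure of the cubic case: since $\Phi\sim sn$ is odd, both $\Phi^2-c$ and $\Phi'$ are even, the two negative eigenvectors $\chi_0\sim dn$ and $\chi_1\sim cn$ of $L$ live in the even subspace, and the kernel $\mathrm{span}\{\Phi\}$ lies in the odd subspace. Assume for contradiction that $L[Z]=\mu Z'$ holds for some $\mu$ with $\Re\mu\neq 0$ and nontrivial $Z$. Pairing this equation with $1$ and with $\Phi\in\ker L$ yields the orthogonalities $Z\perp\Phi^2-c$ and $Z\perp\Phi'$, both of which constrain only the even part of $Z$.

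To rule out complex instabilities, decompose $Z=u+iv$ with $u,v$ real, $\mu=\mu_1+i\mu_2$ with $\mu_1,\mu_2\neq 0$, and split $u=u_1+u_2$, $v=v_1+v_2$ into even and odd parts. The same algebraic manipulations as in Section \ref{sec:4} produce $\dpr{Lu_1}{u_1}+\dpr{Lv_1}{v_1}=0$, where $u_1,v_1$ are even and perpendicular to $\Phi^2-c$ and $\Phi'$. We then invoke Theorem \ref{theo:200} applied to $L|_{L^2_{even}}$ (which has exactly two simple negative eigenvalues and trivial kernel) with the subspace $\cz=\mathrm{span}\{\Phi^2-c,\Phi'\}$. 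The crucial simplification here is that $L[1]=\Phi^2-c$, so $L^{-1}(\Phi^2-c)=1$ modulo $\ker L$; consequently $\dpr{L^{-1}(\Phi^2-c)}{\Phi^2-c}=\int_{-M}^{M}(\Phi^2-c)\,d\eta<0$ by \eqref{z:70}, and $\dpr{L^{-1}(\Phi^2-c)}{\Phi'}=\int\Phi'\,d\eta=0$ by periodicity. The $2\times 2$ resolvent matrix on $\cz$ is therefore diagonal, and its negative definiteness reduces to the single scalar inequality $\dpr{L^{-1}\Phi'}{\Phi'}<0$.

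Establishing this last inequality is the principal obstacle. In the rescaled variable $y=\alpha\eta$ we have $\Phi'=\Phi_2\alpha\,cn(y)dn(y)$; the reflection $y\mapsto 2K(k)-y$ (under which $cn$ and $\Phi'$ change sign while $dn,sn$ are fixed) implies $\dpr{\Phi'}{\chi_0}\propto\int cn\,dn^2\,dy=k^2\int sn^2\,cn\,dy=0$, so the lowest eigenmode contributes nothing to the eigenfunction expansion of $\Phi'$. The coupling $\dpr{\Phi'}{\chi_1}\propto\int cn^2\,dn\,dy=\pi$ is nonzero and supplies the dominant negative piece of $\dpr{L^{-1}\Phi'}{\Phi'}$. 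To control the residual contribution from the positive spectrum, we construct a second independent solution $\tilde\Phi=\Phi\int\Phi^{-2}\,d\eta$ of $L\psi=0$ by reduction of order, assemble the Green's function on the periodic interval $[-2K,2K]$, and reduce the resulting quantity to a rational expression in the complete elliptic integrals $K(k)$ and $E(k)$. Symbolic and graphical verification (in parallel to Figure \ref{fig1} in Section \ref{sec:4}) then confirms $\dpr{L^{-1}\Phi'}{\Phi'}<0$ for all $k\in(0,1)$.

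With the matrix condition verified, the rest is routine. Theorem \ref{theo:200} yields strict positivity of $L$ on $\cz^\perp\cap L^2_{even}$ (trivial kernel there), so $u_1=v_1=0$; the remaining even/odd identities then force $u_2,v_2\in\mathrm{span}\{\Phi\}$ and, using $\mu_1,\mu_2\neq 0$, these coefficients must vanish as well, producing $Z=0$. For real instabilities, $Z=u+v$ with $u$ even, $v$ odd, and $\mu$ real, the identity $\dpr{Lu}{u}+\dpr{Lv}{v}=0$, combined with $\dpr{Lu}{u}\geq 0$ from Theorem \ref{theo:200} and positivity of $L|_{L^2_{odd}\cap\Phi^\perp}$, forces $u=0$ and $v=a\Phi$; then $Lu=\mu a\Phi'=0$ with $\mu,\Phi'\neq 0$ gives $a=0$, so $Z=0$, contradicting nontriviality.
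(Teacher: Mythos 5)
Your proposal is correct in outline and reproduces the paper's architecture for the real-instability case almost verbatim: the reduction to negative definiteness of the $2\times 2$ matrix on $\mathrm{span}\{\Phi', \Phi^2-c\}$, the observation that $L[1]=\Phi^2-c$ makes the matrix diagonal with $D_{22}=\dpr{1}{\Phi^2-c}<0$ by \eqref{z:70}, and the final reduction to the single inequality $\dpr{L^{-1}\Phi'}{\Phi'}<0$, verified via a Green's function built from a second solution of $L\psi=0$ and a symbolic/graphical check. Your added remark that $\dpr{\Phi'}{\chi_0}\propto\int cn\,dn^2\,dy=0$ by the antiperiodicity of $cn$ over half the period is correct and not in the paper, though as you note it does not by itself replace the numerical verification.

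Where you genuinely diverge is the complex-instability step, and there the paper's choice is both cheaper and safer. You work with the \emph{even} parts $u_1,v_1$ and invoke Theorem \ref{theo:200} on $L|_{L^2_{even}}$, which (i) makes the elimination of complex eigenvalues depend on the hard input $\dpr{L^{-1}\Phi'}{\Phi'}<0$, and (ii) leaves a gap: Theorem \ref{theo:200} only yields $L|_{\cz^\perp}\geq 0$, not coercivity, so $\dpr{Lu_1}{u_1}+\dpr{Lv_1}{v_1}=0$ does not immediately force $u_1=v_1=0$ --- it only gives $Lu_1, Lv_1\in\cz$, and you would have to return to the system \eqref{550} to finish. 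The paper instead computes $\dpr{Lu_2}{u_2}+\dpr{Lv_2}{v_2}=0$ for the \emph{odd} parts; since the entire negative subspace of $L$ is spanned by the even functions $\chi_0,\chi_1$, the operator $L$ restricted to $L^2_{odd}$ is genuinely non-negative with kernel $\mathrm{span}\{\Phi\}$, so the vanishing of the quadratic form directly gives $u_2,v_2\in\mathrm{span}\{\Phi\}$, and the rest is algebra in $\mu_1,\mu_2$. This is a soft argument requiring no resolvent information. I recommend you switch your complex-case decomposition to the odd components; your real-case argument then stands essentially as written (with the cosmetic fix of deducing $u=\text{const}$ from $\mu u'=Lv=0$ rather than from vanishing of the quadratic form).
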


The proof of Theorem \ref{theo:100} follows path similar to the proof of Theorem \ref{theo:50}.
We first rule out complex instabilities. For the real instabilities, we need to use
Theorem \ref{theo:200} for specific co-dimension two subspace $Z: Z\perp Ker(L)$.  Finally, we need to verify that $L^{-1}|_Z<0$, which reduces to verifying the same type of quantity as before.

Based on the information for the operator $L$ in \eqref{cc2}, we have the following spectral picture
$$
\left\{
\begin{array}{l}
\sigma(L)=\{-\si_0^2\}\cup \{-\si_1^2\}\cup   \{0\}\cup \si_{+}(L)\\
L \chi_0=-\si_0^2 \chi_0,  L\chi_1 = -\si_1^2 \chi_1 L[\Phi]=0.
\end{array}
\right.
$$
Note that $\chi_0, \chi_1$ are even functions, while $\Phi$ is an odd function. Next, we
are considering the eigenvalue problem \eqref{z:65}. Taking dot product with the constant $1$ yields
$$
\dpr{L[Z]}{1}=\mu \dpr{Z'}{1}=0.
$$
Thus, $\dpr{Z}{L[1]}=\dpr{L[Z]}{1}=0$, whence $Z\perp L[1]=\Phi^2-c$.  Taking dot product of \eqref{z:65} with $\Phi$, we obtain
$$
\mu\dpr{Z'}{\Phi}=\dpr{L[Z]}{\Phi}=\dpr{Z}{L[\Phi]}=0,
$$
whence $Z\perp \Phi'$.

\subsection{Proof of Theorem \ref{theo:100}: Ruling out complex instabilities}
We now rule out complex instabilities. We proceed in the same way as in the derivation of \eqref{540} and subsequently \eqref{550}, \eqref{580}, \eqref{590}.  Instead of verifying the quantity \eqref{528} however\footnote{which is by the way still valid}, we now compute
\begin{eqnarray*}
\dpr{L u_2}{u_2}+\dpr{L v_2}{v_2} &=& \mu_1(\dpr{u_1'}{u_2}+\dpr{v_1'}{v_2})+
\mu_2(\dpr{u_1'}{v_2}-\dpr{v_1'}{u_2})=\\
&=& \mu_1(\dpr{u_1'}{u_2}+\dpr{u_2'}{u_1})+\mu_2(\dpr{u_1'}{v_2}+\dpr{v_2'}{u_1})=0.
\end{eqnarray*}
where   we have used \eqref{580} and \eqref{590}. But now, $u_2, v_2$ are both odd functions and as such, project on the non-negative subspace of $L$ (recall that the negative subspace is spanned by $\chi_0, \chi_1$, both even functions). Thus, it follows that $\dpr{L u_2}{u_2}=0$ and
$\dpr{L v_2}{v_2}=0$, whence
$$
u_2=C_1 \Phi, v_2=C_2 \Phi.
$$
Putting this back in \eqref{550} yields in particular
$$
\left|
\begin{array}{l}
\mu_1 u_1'-\mu_2 v_1'=0 \\
\mu_2 u_1'+\mu_1 v_1'=0
\end{array}
\right.
$$
Since $\mu_1\neq 0, \mu_2\neq 0$, it follows that $u_1=A_1, v_1=B_1$. But then again from \eqref{550}, we obtain
$$
\left|
\begin{array}{l}
A_1 L[1]=(\mu_1 C_1-\mu_2 C_2)\Phi' \\
B_1 L[1]=(\mu_2 C_1+\mu_1 C_2) \Phi'.
\end{array}
\right.
$$
Since $L[1]=\Phi^2-c\neq \Phi'$, we have $A_1=B_1=0$ and $C_1=C_2=0$, which is the zero solution for the spectral problem, a contradiction.

\subsection{Proof of Theorem \ref{theo:100}: Ruling out real instabilities}
In this section, we rule out the real instabilities. Assume for a contradiction that there is $\mu>0$, so that the eigenvalue problem \eqref{z:65} has a solution $Z=u+v$, where $u$ is even and $v$ is an odd function. We have then
\begin{equation}
\label{n:30}
\left|
\begin{array}{l}
L u = \mu v' \\
L v=\mu u'
\end{array}
\right.
\end{equation}
Taking dot products with $u$ and $v$ respectively and adding
\begin{equation}
\label{n:40}
\dpr{Lu}{u}+\dpr{L v}{v}= \mu(\dpr{v'}{u}+\dpr{u'}{v})=0.
\end{equation}
Again, the odd function $v$ projects over the non-negative subspace of $L$ only. Thus,
$\dpr{L v}{v}\geq 0$. Assume first that $\dpr{L v}{v}=0$. It follows that $v=c_0 \Phi$ for some constant $c_0$. It follows from \eqref{n:30} that
$$
\mu u'=L[c_0 \Phi]=0,
$$
whence $u=c_1$. But then, from the other equation in \eqref{n:30},
$$
c_1 L[1]=\mu c_0 \Phi'.
$$
Again $L[1]=\Phi^2-c\neq \Phi'$ and this is only possible if $c_1=c_0=0$, a contradiction.

Thus, it must be that $\dpr{L v}{v}>0$. By \eqref{n:40}, this implies that $\dpr{Lu}{u}<0$.
We will show that this leads to a contradiction as well.

 We need the following simple lemma\footnote{The lemma is well-known, but we add its proof for completeness, since we don't have a direct reference for it.}.
\begin{lemma}
\label{le:simple}
Let $\cx$ be a real Hilbertian space, with $Z=span\{\eta_1, \eta_2\}\subset \cx$, where
$\eta_1, \eta_2: \eta_1\perp \eta_2$. Let $A$ be a self-adjoint operator on $\cx$. Then
$A|_Z<0$ if and only if the matrix
$$
D=(D_{ij})_{i,j=1,2}, \ \ D_{ij}=\dpr{A \eta_i}{\eta_j}.
$$
is negative definite. Equivalently, $D$ is negative definite if
$$
\dpr{A\eta_1}{\eta_1}<0, \det(D)>0.
$$
\end{lemma}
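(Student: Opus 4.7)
The plan is to reduce the question about the restriction $A|_Z$ to a purely finite-dimensional linear-algebra statement about the $2\times 2$ Gram-type matrix $D$, and then invoke Sylvester's criterion for negative definiteness.

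First I would parametrize an arbitrary element $z\in Z$ as $z=c_1\eta_1+c_2\eta_2$, with $(c_1,c_2)\in\mathbb{R}^2$, and note that $z\neq 0$ iff $(c_1,c_2)\neq (0,0)$ (here I use $\eta_1\perp\eta_2$ only to know they are linearly independent; orthogonality is not otherwise needed for the computation). Expanding by bilinearity and using that $A$ is self-adjoint so that $\dpr{A\eta_1}{\eta_2}=\dpr{\eta_1}{A\eta_2}=\dpr{A\eta_2}{\eta_1}$, one gets
\begin{equation*}
\dpr{Az}{z}=c_1^2 D_{11}+2c_1c_2 D_{12}+c_2^2 D_{22}=(c_1,c_2)\,D\,(c_1,c_2)^T.
\end{equation*}
Thus the restricted quadratic form $z\mapsto \dpr{Az}{z}$ on $Z$ is, after choosing the basis $\{\eta_1,\eta_2\}$, represented exactly by the symmetric matrix $D$. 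In particular, $A|_Z<0$ (meaning $\dpr{Az}{z}<0$ for all nonzero $z\in Z$) is equivalent to the matrix $D$ being negative definite on $\mathbb{R}^2$.

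For the second part of the statement, I would invoke Sylvester's criterion applied to $-D$: the symmetric $2\times 2$ matrix $D$ is negative definite if and only if $-D$ is positive definite, which by Sylvester holds if and only if $-D_{11}>0$ and $\det(-D)=\det(D)>0$, i.e. $D_{11}=\dpr{A\eta_1}{\eta_1}<0$ and $\det(D)>0$. This is exactly the stated equivalent condition.

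There is no real obstacle here: the lemma is entirely finite-dimensional once one writes out $\dpr{Az}{z}$ in coordinates, and everything follows from the self-adjointness of $A$ (which makes the matrix symmetric) together with Sylvester's criterion. The only thing to be slightly careful about is noting that in the real Hilbert setting $\dpr{A\eta_1}{\eta_2}=\dpr{A\eta_2}{\eta_1}$, so that the off-diagonal terms combine into the $2c_1c_2 D_{12}$ cross term and $D$ is genuinely symmetric.
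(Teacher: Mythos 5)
Your proof is correct and follows essentially the same route as the paper's: both reduce $\dpr{Az}{z}$ to the $2\times 2$ quadratic form represented by $D$ and then characterize negative definiteness via the sign conditions on $D_{11}$ and $\det(D)$. The only difference is that the paper proves the $2\times 2$ Sylvester criterion by hand, through the discriminant of $g(\la)=\dpr{A(\la \eta_1+\eta_2)}{\la\eta_1+\eta_2}$, whereas you cite Sylvester directly; your full parametrization $z=c_1\eta_1+c_2\eta_2$ is if anything slightly cleaner, since it covers the ray $c_2=0$ without a separate remark.
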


\begin{proof}
Assume  $A|_Z<0$. Immediately, $\dpr{A\eta_1}{\eta_1}<0$.  Let
$$
g(\la):=\dpr{A(\la \eta_1+\eta_2)}{\la\eta_1+\eta_2}=\la^2 \dpr{A\eta_1}{\eta_1}+
2 \la \dpr{A\eta_1}{\eta_2}+\dpr{A\eta_2}{\eta_2}.
$$
In particular,  $g(\la)<0$ for all $\la$.  Thus,  the quadratic function $\la\to g(\la)$ does not have real roots, that is
$$
\dpr{A\eta_1}{\eta_2}^2-\dpr{A\eta_1}{\eta_1} \dpr{A\eta_2}{\eta_2}<0.
$$
Since $\det(D)=-(|\dpr{A\eta_1}{\eta_2}|^2-\dpr{A\eta_1}{\eta_1} \dpr{A\eta_2}{\eta_2})>0$, we have shown one direction.

Conversely, assume that $D$ is negative definite. Then since $\det(D)>0$, it follows that $g(\la)=0$ does not have real solutions. This, paired with $\dpr{A\eta_1}{\eta_1}<0$ implies that $g(\la)<0$ for all real $\la$, which in turn means that $A|_Z<0$.
\end{proof}
We claim that in order to obtain a contradiction, it is enough to show
\begin{equation}
\label{m:10}
L^{-1}|_{span\{\Phi', \Phi^2-c\}}<0.
\end{equation}
Indeed, since we already have that $Z=u+v\perp \Phi'$ and $v\perp \Phi'$ (as an odd function), we conclude that $u\perp \Phi'$. Similarly, $Z=u+v\perp \Phi^2-c$, $v\perp \Phi^2-c$ (as an odd function), whence $u\perp \Phi^2-c$. Overall, $u\in span\{\Phi', \Phi^2-c\}^\perp$. On the other hand, if \eqref{m:10} holds, by Theorem \ref{theo:200}, we would conclude $L|_{span\{\Phi', \Phi^2-c\}^\perp}\geq 0$, so in particular $\dpr{Lu}{u}\geq 0$, a contradiction with \eqref{n:40}, which implies $\dpr{L u}{u}<0$. Thus, matters have been reduced to showing \eqref{m:10}.

By Lemma \ref{le:simple}, in order to prove \eqref{m:10},  it is enough to show that the  matrix
$$
D=\left(
\begin{array}{cc}
\dpr{L^{-1}[\Phi']}{\Phi'} & \dpr{L^{-1}[\Phi^2-c]}{\Phi'} \\
\dpr{L^{-1}[\Phi']}{\Phi^2-c} & \dpr{L^{-1}[\Phi^2-c]}{\Phi^2-c}
\end{array}
\right)
$$
is negative definite. We have (recalling $\Phi^2-c=L[1]$),
$$
\dpr{L^{-1}[\Phi']}{\Phi^2-c} =\dpr{\Phi'}{L^{-1}[\Phi^2-c]}= \dpr{\Phi'}{L^{-1}[L[1]]}=\dpr{\Phi'}{1}=0.
$$
Also,
$$
\dpr{L^{-1}[\Phi^2-c]}{\Phi^2-c} = \dpr{L^{-1}[L[1]]}{\Phi^2-c} = \dpr{1}{\Phi^2-c}<0,
$$
where in the last step, we have made use of \eqref{z:70}. Thus, the matrix $D$ is diagonal, with $D_{22}<0$. It would follow that $D$ is negative definite, if we can verify that $D_{11}<0$.
Thus, we have reduced  matters,   again,  to showing that
\begin{equation}
\label{z:10}
\dpr{L^{-1}[\Phi']}{\Phi'}<0.
\end{equation}

\subsection{Computing $\dpr{L^{-1}[\Phi']}{\Phi'}$}
By rescaling and from \eqref{cc2},  we may take $L$ as follows
\begin{eqnarray*}
L &=& -\p_y^2+ 2\ka^2 sn^2(y,k)-(1+\ka^2)\\
\Phi &=&  sn(y,k), -2K(\ka)\leq y\leq 2 K(\ka).
\end{eqnarray*}
As before, the first order of business is to construct the Green's function. We need a second function in the kernel of $L$, in addition of $\Phi$. Normally, we would take
$
\Psi(x)=\Phi(x) \int^x \f{1}{\Phi^2(y)} dy,
$
but note that the (definite) integral would be divergent over any interval containing zero, because of the quadratic singularity of $sn(y,k)$ at $y=0$. Instead, one integrates by parts and we come up with an equivalent expression, which is however well-defined. Namely, using that 
$$\f{1}{sn^2(x, \kappa)}=-\f{1}{dn(x, \kappa)}\f{\partial}{\partial x}\f{cn(x, \kappa)}{sn(x, \kappa)}$$
 and (formally) integrating by parts, we get  
\begin{equation}
\label{m:50}
\Psi(x)=-\f{cn(x, \kappa)}{dn(x, \kappa)}+\kappa^2 sn(x, \kappa)\int_{0}^{x}{\f{cn^2(y, \kappa)}{dn^2(y, \kappa)}}dy.
\end{equation}
This formula makes sense  - note the lack of singularity in the denominator. 
Interestingly, for fixed $\ka$, the function $x\to \Psi(x,\ka)$ turns out to be a periodic function in the basis interval $[-2K(\ka), 2 K(\ka)]$, but its derivative $x\to \p_x \Psi(x,\ka)$ is not periodic\footnote{This is complicates matters somewhat, when one construct the Green's function, but not in a major way} anymore at $\pm 2 K(\ka)$.

Define the Wronskian by the standard formula
$$
W[\kappa]=\Psi'(x, \ka)\Vp(x, \ka)-\Psi(x, \ka)\Vp'(x, \ka).
$$
Using Mathematica, we have found that $W[\ka]=1$, which confirms once again that the function  $\Psi$ constructed in \eqref{m:50} is another non-trivial solution of $L\Psi=0$.  We can now represent for any
$f\in L^2_{per}[-2K(\ka), 2K(\ka)]$,
$$
L^{-1} f (x)= \f{1}{2}\left( \Phi(x) \int_{-2K(\ka)}^x \Psi(y)  f(y) dy- \Psi(x) \int_{-2K(\ka)}^x \Phi(y)  f(y)  dy\right)+ C_f \Psi(x).
$$
where the constant $C_f$ is to be selected so that the function $x\to L^{-1} f(x)$ is periodic in
$[-2K(\ka), 2K(\ka)]$. 

For the function of interest, namely  $f=\Phi'$, we find that
$L^{-1} f(-2K(k))=L^{-1} f(2 K(\ka))$ for all values of $C$, but when we impose the condition
$\p_x  L^{-1} f|_{x=-2K(k)}= \p_x  L^{-1} f|_{x= 2K(k)}$, we come up with a condition for 
$C_{\Phi'}$ 
$$
C_{\Phi'}=\f{1}{4\Psi'(2K(\ka))} \int_{-2K(\ka)}^{2K(\ka)} \Psi(x) \Phi'(x) dx.
$$
Now that we have a proper formula for $L^{-1}[\Phi']$, we may compute
\begin{eqnarray*}
\dpr{L^{-1}[\Phi']}{\Phi'} &=&  \f{1}{2} \left[ \left\langle \Vp\int_{-2K(\kappa)}^{x}{\Psi(y)\Vp'(y)}dy, \Vp'\right\rangle-\left\langle \Psi(x)\int_{-2K(\kappa)}^{x}{\Vp(y)\Vp'(y)}dy, \Vp'\right\rangle\right]+ \\
&+& \f{1}{4\Psi'(2K(\ka))}  (\langle \Psi, \Vp'\rangle)^2.
\end{eqnarray*}
After integrating by parts, we get
 $$
 \left\langle \Vp\int_{-2K(\kappa)}^{x}{\Psi(y)\Vp'(y)}dy, \Vp'\right\rangle=-\f{1}{2}\int_{-2K(\kappa)}^{2K(\kappa)}{\Vp^2(x)\Vp'(x)\Psi(x)}dx
 $$
 and
 $$
 \left\langle \Psi(x)\int_{-2K(\kappa)}^{x}{\Vp(y)\Vp'(y)}dy, \Vp'\right\rangle=
 \f{1}{2}\int_{-2K(\kappa)}^{2K(\kappa)}{\Vp^2\Vp'\Psi}dx.
 $$
Putting it together in the formula for $\dpr{L^{-1}[\Phi']}{\Phi'}$, we obtain
\begin{equation}
\label{m:100}
\dpr{L^{-1}[\Phi']}{\Phi'}= -\f{1}{2} \int_{-2K(\kappa)}^{2K(\kappa)}{\Vp^2\Vp'\Psi}dx+\f{1}{4\Psi'(2K(\ka))}  \left( \int_{-2K(\kappa)}^{2K(\kappa)} \Psi \Vp' \right)^2.
\end{equation}
Clearly, this last expression is a function of $\ka$ only. Computing precisely the integrals by hand
is not easy, even though some of them result in simple expressions, for example 
$$
\int_{-2K(\kappa)}^{2K(\kappa)} \Psi \Vp' = - 2K(\ka).
$$
We use Mathematica for the symbolic integration\footnote{Even within Mathematika, we need to use the representation \eqref{m:50} to split and integrate in parts by hand, whenever possible, in order to obtain explicit formulas} to obtain the precise formulas. 
 Below, we provide the graph of the resulting function, from which it is clear that \eqref{z:10} is satisfied.
  \begin{figure}[h]
\centering
\includegraphics[width=8cm,height=6cm]{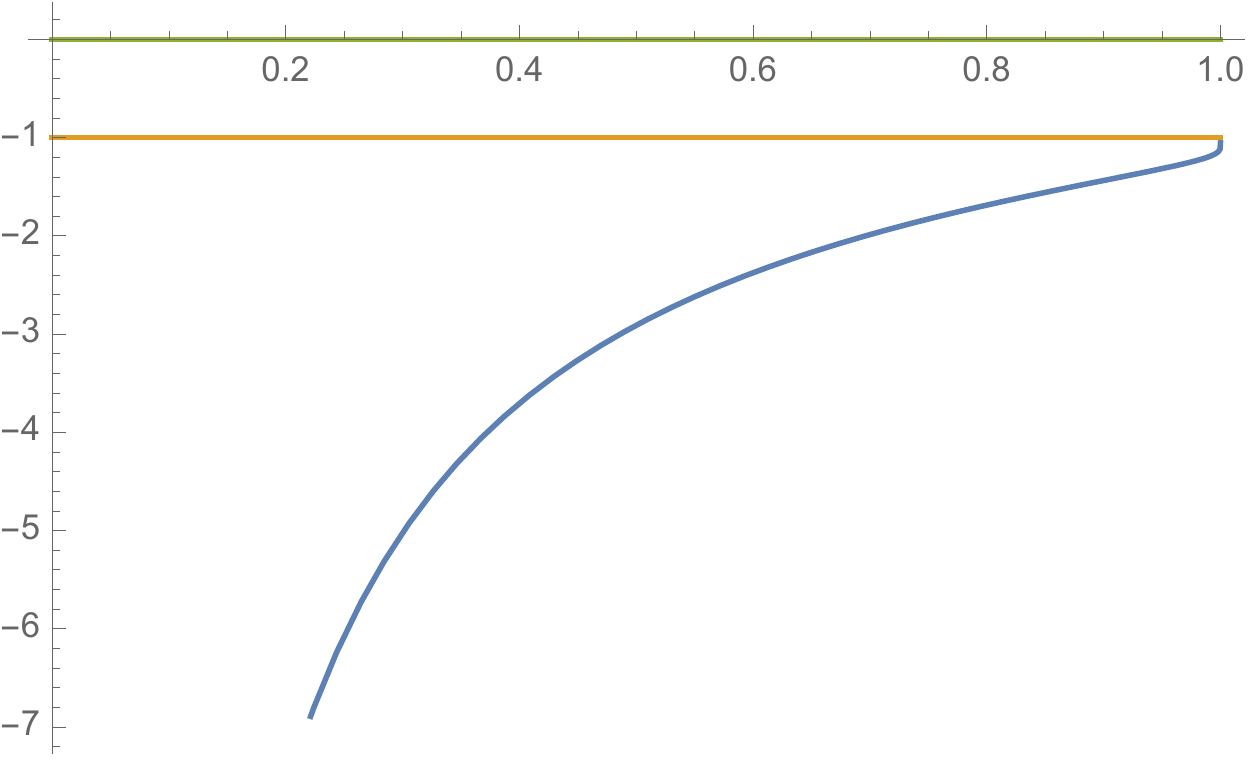}
\caption{The function $\ka\to \dpr{L^{-1}[\Phi']}{\Phi'}$ in blue. The orange line is  $y=-1=
\lim_{k\to 1-}  \dpr{L^{-1}[\Phi']}{\Phi'}$. }
\label{fig14}
\end{figure}

\section{The parabolic  peakons  are ``unstable'' in the absence of periodic boundary conditions}
\label{sec:6}
As we have discussed in the previous sections, the parabolic peakons may be considered as limit solutions of the arise as a limit of the stable snoidal solutions of the Ostrovsky model
introduced in \eqref{2.5}.  In that sense, one might be tempted to claim that they are
stable. We have however left  this question open. This is partially due to the difficulty of defining an appropriate boundary conditions for the corresponding linearized problem.
 In this section, we construct (almost explicitly)  solutions of \eqref{z:62}, which   lack the required periodic properties, but are nevertheless smooth inside $(-L,L)$ and satisfy the equation in classical sense. So, one can say that in a way the parabolic peakons are unstable, once the perturbations are allowed to have ``wild'' boundary conditions.
\begin{theorem}
\label{theo:10}
The parabolic peakons $\vp$, given by \eqref{40} are spectrally unstable for all $L>0$ in the sense that there exists $z\in C^2(-L,L)$ (in fact $z\in C^\infty(-L,L)$) so that \eqref{70} is satisfied in a classical sense for $\xi \in (-L,L)$.
\end{theorem}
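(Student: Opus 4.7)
The plan is to produce an explicit polynomial solution of \eqref{70} for the parabolic peakon profile, for every $\la$, and in particular for $\la > 0$.

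First, I would simply substitute $\vp(\xi)=\f{\xi^2}{6}-\f{c}{2}$ with $c=\f{L^2}{9}$ into \eqref{70}. Since $\vp(\xi)-c=\f{\xi^2-L^2}{6}$, multiplying through by $6$ the equation reads
\[
(\xi^2-L^2)z''(\xi)+(2\xi+6\la)z'(\xi)-6z(\xi)=0,\qquad -L<\xi<L.
\]
This is a second order linear ODE with polynomial coefficients and regular singular points only at $\xi=\pm L$, which are outside the open interval of interest. So the space of $C^\infty(-L,L)$ solutions is two-dimensional, and the question reduces to exhibiting at least one.

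Second, I would search for polynomial solutions by ansatz. The balance of degrees in the ODE suggests trying a monic quadratic $z(\xi)=\xi^2+A\xi+B$. Plugging in, the $\xi^2$-coefficient vanishes identically (confirming the ansatz is self-consistent), the $\xi$-coefficient gives $A=3\la$, and the constant term forces $B=3\la^2-\f{L^2}{3}$. Thus
\[
z_\la(\xi):=\xi^2+3\la\,\xi+3\la^2-\f{L^2}{3}
\]
solves the ODE for \emph{every} $\la\in\mathbb{C}$, as one verifies by direct computation. Taking any $\la>0$ yields a $z_\la\in C^\infty(-L,L)$ satisfying \eqref{70} classically inside the interval, which is exactly the statement of Theorem \ref{theo:10}.

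There is essentially no ``hard step'' here; the entire content of the theorem is the observation that without imposing matching conditions at $\pm L$ the linearized ODE is genuinely two-dimensional and admits nontrivial smooth solutions for arbitrary growth rates $\la$. The only point worth emphasizing in the write-up is the contrast with the periodic setting: the polynomial $z_\la$ visibly fails to have $z_\la(-L)=z_\la(L)$ or $z_\la'(-L)=z_\la'(L)$ for generic $\la$, and it also does not satisfy the zero-mean condition $\int_{-L}^L z_\la(x)\,dx=0$ used earlier to derive \eqref{70} from the original linearization. This reflects the fact that the parabolic peakon sits at the boundary of applicability of the change of variables \eqref{80}, and legitimizes the scare quotes in the title of the section: the ``instability'' detected here is only relative to a function class with no control at the corner points.
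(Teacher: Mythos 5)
Your proof is correct, and it takes a genuinely different and more elementary route than the paper. The paper never works with \eqref{70} directly in the $\xi$ variable: it passes through the change of variables \eqref{80}, computes $\Xi(\eta)=L\tanh(3\eta/(2L))$ explicitly, reduces the problem to $-f''-6\sech^2(x)f=\mu_1 f'$ on the whole line, removes the first-order term by the substitution $f=e^{-\mu_1 x/2}q$, and then recognizes $q_1=\sech(x)\tanh(x)$ as a zero mode of the P\"oschl--Teller operator $-\partial_x^2-6\sech^2 x +1$, which pins down the single value $\mu_1=2$. You instead observe that $\vp-c=\f{\xi^2-L^2}{6}$ turns \eqref{70} into the polynomial-coefficient ODE $(\xi^2-L^2)z''+(2\xi+6\la)z'-6z=0$, whose only singular points are at $\xi=\pm L$, and exhibit the explicit quadratic solution $z_\la(\xi)=\xi^2+3\la\xi+3\la^2-\f{L^2}{3}$ for \emph{every} $\la$; I checked the coefficient matching and it is right. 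The two constructions are consistent: unwinding the paper's $e^{-x}\sech(x)\tanh(x)$ through $\xi=L\tanh x$ gives $z(\xi)=\xi(L-\xi)/L^2$, which is exactly your $z_\la$ (up to normalization and the reflection symmetry $\la\to-\la$) at $\la=\pm L/3$ with the constant term vanishing. What your approach buys is brevity, the absence of any change of variables, and a solution for all growth rates at once rather than one distinguished rate; what the paper's approach buys is coherence with the $\eta$-variable framework used for the genuinely periodic waves, plus the particular normalization with $\lim_{\xi\to L-}z(\xi)=0$ and $\lim_{\xi\to-L+}z(\xi)=-2$ that it uses in the Note following the theorem to stress that the periodization of $z$ is not even continuous. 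Your closing remarks about the failure of periodicity and of the zero-mean normalization play the same role as that Note and are worth keeping.
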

{\bf Note:} As we establish below, the  $2L$ periodization of the function $z$ constructed here is not even continuous at $\pm L$, since
$$
\lim_{\xi \to -L+} z(\xi)\neq 0, \ \ \lim_{\xi \to L-} z(\xi)=0.
$$
Thus, the instability result claimed in Theorem \ref{theo:10} is not in the sense of
Definition \ref{defi:1}, but rather in a milder sense, see the precise  description above.   \begin{proof}
Our proof proceeds via the same approach, via the change of variables \eqref{80}.
  In this simple case, with the explicit formula \eqref{40}, we will be able however to explicitly calculate   $\xi=\xi(\eta)$ and its inverse. Indeed, taking derivative in $\eta$ in \eqref{80}, we obtain
\begin{equation}
\label{90}
\f{d\Xi}{d\eta}=1-\f{\Psi'(\eta)}{c}=1-\f{\vp(\xi)}{c}=\f{3}{2 L^2} (L^2-\xi^2).
\end{equation}
Treating this last equality as a separable ODE, we have $\f{d\xi}{L^2-\xi^2}=\f{3}{2 L^2} d\eta$. By integrating the ODE, we obtain the formula
$$
\xi=L \f{C e^{\f{e^{3 \eta}}{L}}-1}{C e^{\f{e^{3 \eta}}{L}}+1}.
$$
This gives us an arbitrary solution of \eqref{90}. We set $C=1$ as a particular solution, which gives us
\begin{equation}
\label{100}
\xi=\Xi(\eta)=L \f{e^{\f{e^{3 \eta}}{L}}-1}{e^{\f{e^{3 \eta}}{L}}+1}=L \tanh(3\eta/(2L)).
\end{equation}
Clearly, the function $\Xi:(-\infty, \infty)\to (-L,L)$ is one-to-one and it has an inverse function $\eta=\eta(\xi): (-L,L)\to (-\infty, \infty)$. Thus, in this case the interval $(-M,M)$, which we used to get from inverting the transformation $\xi=\Xi(\eta)$ is actually degenerate, in that $M=\infty$.
Using this last formula in \eqref{40}, we obtain a formula for $\Phi$. More precisely,
\begin{equation}
\label{200}
\Phi(\eta)=\vp(\Xi(\eta))= \f{L^2}{6} \tanh^2(3\eta/(2L))-\f{L^2}{18}=\f{L^2}{9}
\left(1-\f{3}{2}
\sech^2(3\eta/(2L))\right).
\end{equation}
Note that $-c+\Phi= - \f{L^2}{6} \sech^2(3\eta/(2L))$. We will show that for some $\mu>0$, the resulting eigenvalue problem
\begin{equation}
\label{210}
(-\f{L^2}{81} \p_{\eta\eta} - \f{1}{6} \sech^2(3\eta/(2L))) Z=\pm \mu Z_\eta,\ \ -\infty<\eta<\infty,
\end{equation}
has a solution $Z$.

We make a few more transformation for \eqref{210}. If we take
$
f: Z(\eta)=f(3\eta/(2L)),
$
it leads us  into the equivalent eigenvalue problem
\begin{equation}
\label{220}
-f''(x)- 6  \sech^2(x) f(x)=\pm \mu_1 f'(x),\ \ x\in \rone,
\end{equation}
where $\mu_1=\f{54 \mu}{L}$.

Next, we change variables to reduce \eqref{220} to a regular static Schr\"odinger equation, i.e. one where is no first derivative terms. To this end, fix the plus sign\footnote{the other case of $-\mu_1$ can be treated similarly and note that our statement is about instability, so we only need to show the solvability of \eqref{220} for $+\mu_1$}
 in \eqref{220}. Take $q: f(x)=e^{-\f{\mu_1}{2} x} q(x)$.  Plugging this into \eqref{220} results in the new equation in terms of $q$
 \begin{equation}
 \label{300}
 -q''(x) - 6  \sech^2(x) q(x) = -\f{\mu_1^2}{4} q(x).
 \end{equation}
In this form, we can solve the problem   explicitly. Indeed,
$$
\cl=-\p_{x}^2 - 6 \sech^2(x)=L_+ - 1
$$
where $L_+$ is the Schr\"odinger operator arising in the linearization around the soliton $sech(x)$ in the cubic NLS.  As such, we know that for $q_1:=-\sech'(x)=\sech(x)\tanh(x)$, $L_+[q_1]=0$ or in terms of $\cl$, for $\mu_1=2$, we have
\begin{equation}
\label{n:110}
-q_1''- 6  \sech^2(x) q_1 = -q_1.
\end{equation}

From this last formula  for $q_1$, it is clear that the ($2L$ periodization of the) corresponding function $z$ cannot be even continuous at $\pm L$. Indeed,
$$
\lim_{\xi\to -L+} z(\xi)= \lim_{\eta\to -\infty} Z(\eta)=\lim_{x\to -\infty} f(x)=
\lim_{x\to -\infty}  e^{-x} \sech(x)\tanh(x)=-2
$$
 On the other hand
 $$
\lim_{\xi\to L-} z(\xi)=
\lim_{x\to \infty} e^{-x} f(x)= 0.
$$
Hence, we establish that the $2L$ periodization of the function $z$ constructed here is not even continuous at $\pm L$. On the other hand, $z\in C^\infty(-L,L)$ and it satisfies the eigenvalue equation  \eqref{70} for $\xi \in (-L,L)$ by tracing back the changes of variables.

\end{proof}

\end{document}